\newcommand{\theoremref}[1]{\hyperref[#1]{Theorem~\ref*{#1}}}
\newcommand{\claimref}[1]{\hyperref[#1]{Claim~\ref*{#1}}}
\newcommand{\situationref}[1]{\hyperref[#1]{Situation~\ref*{#1}}}
\newcommand{\lemmaref}[1]{\hyperref[#1]{Lemma~\ref*{#1}}}
\newcommand{\definitionref}[1]{\hyperref[#1]{Definition~\ref*{#1}}}
\newcommand{\propositionref}[1]{\hyperref[#1]{Proposition~\ref*{#1}}}
\newcommand{\conjectureref}[1]{\hyperref[#1]{Conjecture~\ref*{#1}}}
\newcommand{\corollaryref}[1]{\hyperref[#1]{Corollary~\ref*{#1}}}
\newcommand{\exerciseref}[1]{\hyperref[#1]{Exercise~\ref*{#1}}}
\numberwithin{equation}{section}
\theoremstyle{plain}
\newtheorem{theorem}[equation]{Theorem}
\newtheorem{proposition}[equation]{Proposition}
\newtheorem{lemma}[equation]{Lemma}
\newtheorem{corollary}[equation]{Corollary}
\newtheorem{conjecture}[equation]{Conjecture}
\theoremstyle{definition}
\newtheorem{definition}[equation]{Definition}
\theoremstyle{remark}
\newtheorem{remark}[equation]{Remark}
\newcommand{\R}{\mathbb{R}}
\newcommand{\Q}{\mathbb{Q}}
\newcommand{\Z}{\mathbb{Z}}
\newcommand{\A}{\mathbb{A}}
\newcommand{\C}{\mathbb{C}}
\renewcommand{\P}{\mathbb{P}}
\DeclareMathOperator{\Spec}{Spec}
\DeclareMathOperator{\im}{Im}
\DeclareMathOperator{\Gr}{Gr}
\title{The $P=W$ identity for cluster varieties}
\author{Zili Zhang}
\address{University of Michigan, Department of Mathematics}
\email{ziliz@umich.edu}
\begin{document}
\maketitle

\setcounter{tocdepth}{1} 

\begin{abstract}
We find new examples of the $P=W$ identities of de Cataldo-Hausel-Migliorini by studying cluster varieties. We prove that the weight filtration of 2D cluster varieties correspond to the perverse filtration of elliptic fibrations which are deformation equivalent to the elliptic fibrations of types $I_b$ or $2I_b$. These examples do not arise from character varieties or moduli of Higgs bundles.

\end{abstract}
\tableofcontents

\section{Introduction}
\subsection{Perverse filtrations}
Let $Y$ be a K\"ahler manifold. The bounded derived category of constructible sheaves $D^b_c(Y)$ is naturally endowed with a perverse $t$-structure. Denote $^{\mathfrak{p}}\tau_{\le k}$ the perverse truncation functor. There is a natural morphism
\[
^{\mathfrak{p}}\tau_{\le k}K\rightarrow K
\]
for any object $K\in D^b_c(Y)$. Let $h:X\to Y$ be a proper morphism between K\"ahler manifolds. By taking the cohomology functor, the natural morphism
\[
^{\mathfrak{p}}\tau_{\le k} Rh_\ast\Q_X \to Rh_\ast\Q_X
\]
defines a map between the cohomology groups
\begin{equation}\label{0000}
\mathbb{H}^{d-\dim X+r}\left(Y,{^\mathfrak{p}}\tau_{\le k} (Rh_\ast\Q_X[\dim X-r])\right)\rightarrow H^d(X;\Q),
\end{equation}
where $r=\dim X\times_Y X-\dim X$ is the defect of the morphism $f:X\to Y$. We define $P_kH^d(X;\Q)\subset H^d(X;\Q)$ to be the image of (\ref{0000}), and we call the increasing filtration
\[
P_0H^\ast(X)\subset P_1H^\ast(X)\subset\cdots\subset P_kH^\ast(X)\subset\cdots H^\ast(X;\Q)
\] 
the perverse filtration associated with the map $h$. The associated graded pieces are denoted by
\[
\Gr^P_kH^\ast(X):=P_kH^\ast(X)/P_{k-1}H^\ast(X).
\]

In \cite{dCM1}, de Cataldo and Migliorini proved that for any proper morphism between smooth complex algebraic varieties, the associated perverse filtration satisfies the relative hard Lefschetz property. More precisely,   
\begin{equation}\label{0002}
\Gr^P_{r-k}H^\ast(X)\xrightarrow{\cup\alpha^k}\Gr^P_{r+k}H^\ast(X), ~~~k\ge0
\end{equation}
is an isomorphism for any ample $2$-form $\alpha$ on $X$.
 
\subsection{The $P=W$ identities} 
Given any compact complex algebraic variety $X$ and a reductive group $G$, there are two natural moduli spaces associated with them, the $G$-character variety $\mathcal{M}_B$ and the moduli of Higgs $G$-bundles $\mathcal{M}_{D}$. The Higgs moduli $\mathcal{M}_D$ is properly fibered over an affine space $h:\mathcal{M}_D\to\mathbb{A}$. Simpson proved in \cite{Si} that there is a canonical diffeomorphism between $\mathcal{M}_B$ and $\mathcal{M}_{D}$. A remarkable phenomenon discovered by de Cataldo, Hausel and Migliorini in \cite{dCHM} asserts that under the identification induced by the Simpson's diffeomorphism
\[
H^\ast(\mathcal{M}_{D};\Q)\xrightarrow{=}H^\ast(\mathcal{M}_{B};\Q),
\]
the perverse filtration on $H^\ast(\mathcal{M}_{D};\Q)$ associated with $h$ is expected to match the halved mixed Hodge-theoretic weight filtration on $H^\ast(\mathcal{M}_{B};\Q)$ , i.e.  
\begin{equation}\label{0001}
P_kH^\ast(\mathcal{M}_{D})= W_{2k}H^\ast(\mathcal{M}_B)=W_{2k+1}H^\ast(\mathcal{M}_B),k\ge0.
\end{equation}
The equality (\ref{0001}) is referred to as the $P=W$ identity. Under the $P=W$ identity, the relative hard Lefschetz property (\ref{0002}) on $H^\ast(M_D;\Q)$ pulls back to the following identity on $H^\ast(M_B,\Q)$
\begin{equation}\label{0003}
\Gr^W_{d-2k}H^\ast(\mathcal{M}_B)\xrightarrow[\cong]{\cup\beta^k}\Gr^W_{d+2k}H^\ast(\mathcal{M}_B),
\end{equation}
where $d=\dim_\C\mathcal{M}_B$ and $\beta\in H^2(\mathcal{M}_B;\Q)$ is of weight $4$. Satisfying the identity (\ref{0003}) is referred to as the curious hard Lefschetz property; see \cite{HV}. The curious hard Lefschetz property is a necessary condition to expect a variety to be the $W$ side in a $P=W$ identity. Unlike the relative hard Lefschetz property holds for any proper morphism between complex algebraic varieties, the curious hard Lefschetz property is rarely observed for algebraic varieties. Most of the known examples are related to character varieties.

In \cite{LS}, Lam and Speyer proved that cluster varieties of Louise type satisfy the curious Hard Lefschetz property, which suggests that cluster varieties could be the $W$ sides to produce new $P=W$ identities. In this paper, we will focus on the 2 dimensional cluster varieties and prove $P=W$ identities (\ref{0001}) for them. The corresponding $P$ sides are deformation equivalence classes of certain elliptic fibrations. Due to the lack of moduli interpretations, we don't expect a canonical diffeomorphism between them. However, we will prove that under $\it{any}$ diffeomorphism between them, the $P=W$ identity holds. The existence of a diffeormophism is proved by the theory of 4-manifold surgery. 
\begin{theorem}[Theorem \ref{main}]
Let $X$ be a 2 dimensional cluster variety. Then there exists a deformation of elliptic fibrations such that for any elliptic fibration $h:Y\to \Delta$ in the deformation, the following holds.
\begin{enumerate}
\item The underlying $C^\infty$ manifolds of $X$ and $Y$ are diffeomorphic.
\item For any diffeomorphism $f:X\to Y$, the $P=W$ identity holds, i.e. under the pullback isomorphism $f^*:H^*(Y;\Q)= H^*(X;\Q)$, 
\[
P_kH^*(Y;\Q)=W_{2k}H^*(X;\Q)=W_{2k+1}H^*(X;\Q)
\]
for any $k\ge0$.
\end{enumerate}
\end{theorem}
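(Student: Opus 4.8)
The plan is to establish the two assertions in sequence, using the structure theory of 2-dimensional cluster varieties as the engine. First I would recall/derive the explicit geometry of a 2-dimensional cluster variety $X$: it is a smooth affine surface obtained by gluing algebraic tori $(\C^*)^2$ via cluster mutations, and (by the classification underlying the Louise-type examples of Lam--Speyer) its underlying $C^\infty$ 4-manifold, together with its mixed Hodge structure on $H^*(X;\Q)$, can be computed combinatorially. In particular $X$ should be shown to be homotopy equivalent to (or built from) a configuration whose Betti numbers and weight filtration I can write down explicitly; the key invariant that will appear is an integer $b$ recording the combinatorial complexity (number of mutations / rank of $H^2$), which will match the $I_b$ or $2I_b$ on the elliptic side.

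For part (1), the strategy is to produce on the $P$-side an elliptic fibration $h:Y\to\Delta$ over a disk whose singular fibers are of Kodaira type $I_b$ (one nodal degeneration, cyclic vanishing cycle) or $2I_b$, and to compute the diffeomorphism type of the total space $Y$. Both $X$ and $Y$ are open 4-manifolds; I would match them by exhibiting each as the result of plumbing/surgery on the same link of configurations of 2-spheres and tori, invoking the 4-manifold surgery theory alluded to in the introduction (Freedman--Quinn-style results, or more elementarily handle decompositions adapted to the fibration and to the cluster atlas). The deformation of elliptic fibrations is then chosen so that along it the singular fiber configuration is preserved up to the moves that do not change the $C^\infty$ total space, which is where the phrase ``there exists a deformation'' does its work: I only need one fibration in the deformation class to be diffeomorphic to $X$, and relative hard Lefschetz (\ref{0002}) together with invariance of the perverse filtration under deformation will propagate the conclusion.

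For part (2), the point is that the perverse filtration $P_\bullet H^*(Y;\Q)$ and the weight filtration $W_\bullet H^*(X;\Q)$ are each, after the identifications above, \emph{intrinsic}: the perverse filtration for an elliptic surface over a curve is essentially the Leray filtration of $h$ (the defect $r$ and decomposition theorem make $P_\bullet$ coincide with the filtration by $H^j(\Delta, R^ih_*\Q)$), and for these low-rank cases it is determined by the degree grading together with which classes come from the base versus the fiber; likewise $W_\bullet$ on a smooth affine surface is determined degree-by-degree with $W$ concentrated in the expected range. So I would simply compute both filtered vector spaces in each cohomological degree $0,1,2$ and observe $P_k=W_{2k}=W_{2k+1}$ by matching dimensions and the action of the Lefschetz operator $\cup\alpha$ (resp. $\cup\beta$), using that any diffeomorphism $f$ pulls back $H^*$ as graded rings and that the Lefschetz/Hard-Lefschetz constraints pin down the filtrations uniquely once ranks are known. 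Because $f$ need not respect any extra structure, the real content is that in these cases the filtrations are forced by the cohomology ring plus (relative) hard Lefschetz, so \emph{any} $f^*$ carries one to the other.

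The main obstacle I anticipate is part (1): producing the diffeomorphism $X\cong Y$ rigorously. Matching mixed Hodge numbers and Leray/perverse data is bookkeeping, but identifying the smooth 4-manifold underlying a cluster variety with that of an elliptic fibration requires a genuine handle-body or surgery argument — controlling how the cluster mutation gluings assemble into a plumbing on tori and $(-1)$/$(-2)$-spheres, and then recognizing that plumbing as a neighborhood of an $I_b$ (or $2I_b$) singular fiber. The subtlety is that cluster varieties are only \emph{diffeomorphic}, not biholomorphic, to these fibrations, so I cannot use algebraic geometry here and must argue topologically; keeping track of framings and the intersection form (it should be negative definite of the expected rank, or trivial, matching $I_b$) will be the delicate step, and is presumably where the 4-manifold surgery input is indispensable.
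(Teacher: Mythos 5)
Your outline of part (1) is broadly in the spirit of the paper: the paper realizes $_IX_b$ itself as a Lefschetz $T^2$-fibration over $\R^2$ via $(x,x',y)\mapsto(|x|^2-|x'|^2,\log|y|)$, identifies it with the $I_b$ fibration by uniqueness of the $2$-handle attachment along the vanishing cycle with framing $-1$ (plus a $b$-fold covering-space argument, and a three-chart handle argument for type II), and then uses deformation invariance of the perverse filtration to cover the whole deformation class. No Freedman--Quinn input is needed; it is explicit Kirby-calculus/plumbing plus classification of covers.

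The genuine gap is in part (2). You claim that once the dimensions of the graded pieces are known, the filtrations are ``forced by the cohomology ring plus (relative) hard Lefschetz,'' so that an arbitrary $f^*$ must match them. This fails: for a surface, $H^2\cup H^2$ lands in $H^4=0$ and (for type II) $H^1=0$, so the ring structure carries essentially no information about which codimension-one subspace of $H^2$ is $W_2H^2(X)$; likewise curious hard Lefschetz only singles out a one-dimensional complement $\Gr^W_4H^2$ spanned by a non-canonical class $\beta$, and does not determine $W_2H^2$ as a subspace. Since $f$ is an arbitrary diffeomorphism, you must exhibit a \emph{diffeomorphism-invariant} characterization of both $W_2H^2(X)$ and $P_1H^2(Y)$ and check they agree. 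This is the actual content of the paper's Propositions \ref{w}, \ref{p} and \ref{w2}: $W_2H^2({_IX_b})=\ker\int_Z$ and $P_1H^2({_IY_b})=\ker\int_F$, where $Z$ (the necklace of spheres) and $F$ (the central fiber) are deformation retracts, so their homology classes are carried to each other up to sign by any diffeomorphism; equivalently both subspaces equal $\im\{H^2_c\to H^2\}$, which is manifestly a diffeomorphism invariant. Proving $W_2H^2=\ker\int_Z$ in turn requires the explicit integrals $\int_{S_j}y^c\gamma$ (roots-of-unity sums vanishing) from Lam--Speyer, not just dimension counts. Without this step your argument does not rule out a diffeomorphism whose pullback preserves all Betti numbers but moves $P_1H^2(Y)$ off of $W_2H^2(X)$.
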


\medskip In \cite{dCHM}, the $P=W$ identity is proved between (twisted) $G$-character variety $\mathcal{M}_B$ of and the moduli of degree 1 $G$-Higgs bundles $\mathcal{M}_D$ over a proper curve of genus at least 2, for $G=\mathrm{GL}(2,\C),\mathrm{SL}(2,\C)$ and $\mathrm{PGL}(2,\C)$. In recent progress \cite{S,SZ}, the $P=W$ identity is proved for five families of parabolic moduli spaces. The $W$ sides are parabolic character varieties on a punctured curve and the $P$ sides are moduli of parabolic Higgs bundles. The Hitchin map $h:\mathcal{M}_{D}\to \mathbb{A}$ are (Hilbert schemes of) elliptic fibrations of the types $I_0$, $I_0^\ast$, $IV^\ast$, $III^\ast$ and $II^\ast$ according to the Kodaira's classification of elliptic fibrations. In \cite{Z}, a numeric version of the $P=W$ identity is proved for certain 2 dimensional moduli spaces of wild parabolic Higgs bundles of rank $2$ over $\P^1$. The $P=W$ identities for cluster varieties studied in this paper turn out to be new because the elliptic fibrations on the $P$ side are obstructed to be defined on $\A^1$; they are defined on the unit disk only. Since the singular fibers in our elliptic fibrations occur in the local study of the moduli wild parabolic Higgs bundles over $\P^1$ with 2 marked points \cite{I}, our result suggests that cluster varieties are analytically local version of character varieties. We do not know how to interpret this observation precisely, but we do expect a deep relation between them.

\subsection*{Acknowledgements} The author thanks Mark de Cataldo, Thomas Lam, Jun Li, Jingchen Niu, Junliang Shen, David Speyer, Ruijie Yang and Boyu Zhang for helpful discussions.

\section{2D cluster varieties and weight filtrations}
In this section, we study the (co)homology groups and the mixed Hodge structures of 2 dimensional cluster varieties. They will be on the $W$ side in the $P=W$ identities. We refer to \cite[Section 4]{LS} for notations and definitions for cluster varieties. In particular, we work with cluster varieties whose exchange matrices have skew-symmetric principal parts. 2-dimensional cluster varieties are classified by the number of frozen variables as follows.
\begin{enumerate}
\item Type 0. No mutable variable and two frozen variables. Then we obtain the cluster torus $\C^\ast\times\C^\ast$. Denote it by $X_0$
\item Type I. One mutable variable and one frozen variable. Then the exchange matrix has the form $\left(\begin{array}{c}0\\b\end{array}\right)$, where $b$ is an positive integer. We denote the corresponding cluster variety $_IX_b$. 
\item Type II. Two mutable variables and no frozen variable. Then the exchange matrix has the form $\left(\begin{array}{cc}0&b\\-b&0\end{array}\right)$, where $b$ is an positive integer. We deonte the corresponding cluster variety $_{II}X_b$.
\end{enumerate}
We will omit subscripts when no confusion arises.

\subsection{Type I} The cluster variety of type $_IX_b$ is studied in \cite[Section 7]{LS}. 
\begin{proposition}{\cite[Proposition 7.1]{LS}} \label{hodge}
Let $_IX_b$ be the 2D cluster variety defined by the exchange matrix
\[
\left(\begin{array}{c}0\\b\end{array}\right),
\]
where $b$ is a positive integer. Then 
\begin{enumerate}
\item $_IX_b$ is the affine surface 
\[
_IX_b=\{(x,x',y)\in\C^3\mid xx'=y^b+1,y\neq0\}.
\]
\item $_IX_b$ deformation retracts to the locus $|x|=|x'|$, $|y|=1$, which is $b$ 2-spheres arranging as a necklace. We denote them by $S_1,\cdots,S_b$.
\item $H_1({_IX_b};\Q)=\Q$, and $H_2({_IX_b};\Q)=\Q^{b}$, where generators in $H_2({_IX_b};\Q)$ are represented by $S_1,\cdots,S_b$. 
\item The generator of $H^1({_IX_b};\Q)$ is $(2\pi i)^{-1}d\log y$. The generators of $H^2({_IX_b};\C)$ are represented by differential forms 
$\gamma,y\gamma,\cdots, y^{b-1}\gamma$, where $\gamma$ is the unique regular extension of $(2\pi i)^{-2}d\log x\wedge d\log y$. (See Remark \ref{2000}.)
\item The mixed Hodge structure on $H^\ast({_IX_b})$ is split over $\Q$. The dimensions of graded pieces are listed in the table below.  
\begin{center}
\begin{tabular}{cccccc}
\hline
 & $\Gr_0^W$ & $\Gr_1^W$ & $\Gr_2^W$ & $\Gr_3^W$ & $\Gr_4^W$\\
\hline
$H^0$ & $1$ & $0$ & $0$ & $0$ & $0$ \\
\hline
$H^1$ & $0$ & $0$ & $1$ & $0$ & $0$ \\
\hline
$H^2$ & $0$ & $0$ & $b-1$ & $0$ & $1$ \\
\hline
\end{tabular}
\end{center}
More precisely, the forms $(2\pi i)y\gamma,\cdots, (2\pi i)y^{b-1}\gamma$ are in $H^2({_IX_b};\Q(e^{\pi i/b}))$. The $(b-1)$-dimensional $\C$-vector space they span has a $\Q$-basis, and their $\Q$-span is $W_2H^2({_IX_b};\Q)$. The weight 4 piece $\Gr_4^WH^2({_IX_b};\Q)$ is generated by $\gamma$.
\end{enumerate}
\end{proposition}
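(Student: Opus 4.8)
The plan rests on two observations. First, ${_IX_b}$ is easily seen to be smooth, and deleting the divisor $\{x=0\}$ from it leaves a torus: the map $(x,x',y)\mapsto(x,y)$ identifies $X^\circ:=\{xx'=y^b+1,\ x\neq0\}$ with $(\C^\ast)^2$ (one recovers $x'=(y^b+1)/x$), while $Z:=\{x=0\}\cap{_IX_b}$ is a disjoint union of $b$ affine lines $\{x=0,\ y=\zeta\}$, one for each root $\zeta$ of $y^b=-1$. Second, ${_IX_b}$ maps to $\C^\ast_y$ as a conic bundle with $b$ nodal fibres, and that is what produces the necklace. I would dispatch item~(1) by unwinding the rank-one case: there are exactly two seeds, related by the single mutation $\mu_1$; for the extended exchange matrix $\left(\begin{array}{c}0\\b\end{array}\right)$ the exchange relation is $xx'=y^b+1$; and a direct computation of the rank-one (upper) cluster algebra from the Laurent phenomenon gives coordinate ring $\C[x,x',y^{\pm1}]/(xx'-y^b-1)$, i.e.\ ${_IX_b}=\{xx'=y^b+1,\ y\neq0\}$.

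For item~(2) I would project ${_IX_b}\to\C^\ast_y$. Over $y$ with $c:=y^b+1\neq0$ the fibre is the conic $\{xx'=c\}$, which deformation retracts onto the circle $\{|x|=|x'|\}$; over each of the $b$ points with $y^b=-1$ the fibre is the node $\{xx'=0\}$, which contracts to its singular point, and as $c\to0$ the circle shrinks to that point, so these fibrewise retractions glue to a deformation retract of ${_IX_b}$ onto $\{|x|=|x'|,\ xx'=y^b+1,\ y\neq0\}$. Retracting the base $\C^\ast_y$ radially onto $\{|y|=1\}$ and lifting (the fibre circle has $|x|^2=|y^b+1|$ and collapses exactly over the $b$ roots of $y^b=-1$, all of which lie on $\{|y|=1\}$) brings us to $\{|x|=|x'|,\ |y|=1\}$; writing $y=e^{i\theta}$ this is a circle bundle over $S^1$ whose fibre collapses at $\theta=(2k+1)\pi/b$, hence $b$ two-spheres $S_1,\dots,S_b$ meeting cyclically — a necklace. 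I expect the fiddliest point of the whole argument to be making these homotopies jointly continuous across the degenerating fibres; it is a genuine verification but, given that the target locus and the collapse behaviour are completely explicit, not a deep difficulty.

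For items~(3)--(5) I would feed $X^\circ\hookrightarrow{_IX_b}$, with complement $Z=\bigsqcup_{\zeta^b=-1}\A^1$, into the Gysin sequence of mixed Hodge structures. Since $H^\ast(X^\circ)=H^\ast((\C^\ast)^2)$ and $H^0(Z)=\Q^b$, $H^{>0}(Z)=0$, it becomes
\[
0\to H^1({_IX_b})\to\Q(-1)^2\xrightarrow{\ \mathrm{res}\ }\Q(-1)^b\to H^2({_IX_b})\to\Q(-2)\to0,
\]
with $H^k({_IX_b})=0$ for $k\geq3$. The residue map sends $d\log x\mapsto(1,\dots,1)$ and $d\log y\mapsto0$, so $H^1({_IX_b})=\Q(-1)\cdot d\log y$ is pure of weight $2$ (equivalently, it is pulled back from $H^1(\C^\ast)$ via $y$), $\operatorname{coker}(\mathrm{res})\cong\Q(-1)^{b-1}$, and hence $\dim H^2({_IX_b})=b$ with $W_2H^2=\Q(-1)^{b-1}$, $\Gr^W_3H^2=0$, $\Gr^W_4H^2\cong\Q(-2)$ — which is exactly the table in item~(5), and whose Betti numbers cross-check against $H_\ast$ of the necklace from item~(2).

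It remains to pin down the generators and the $\Q$-structures. I would first check that $\gamma=(2\pi i)^{-2}d\log x\wedge d\log y$ on $X^\circ$ extends regularly across $Z$: from $xx'=y^b+1$ one has $y-\zeta=xx'\cdot(\text{unit})$ near a component of $\{x=0\}$, whence $d\log x\wedge d\log y=(\text{unit})\,dx\wedge dx'$ is regular (at the node $dx\wedge dx'$ is even a local volume form), so $\gamma,y\gamma,\dots,y^{b-1}\gamma$ are global regular $2$-forms. A residue computation along $S_j$ — integrate $d\log x$ over the fibre circle, then the remaining $y^{k-1}dy$ over the arc of $\{|y|=1\}$ from $y_{j-1}$ to $y_j$ (consecutive roots of $y^b=-1$) — gives $\int_{S_j}\gamma=\tfrac1b$ and $\int_{S_j}y^k\gamma=\tfrac{1}{2\pi i\,k}\bigl(y_j^{\,k}-y_{j-1}^{\,k}\bigr)$ for $1\le k\le b-1$; writing $y_j=\omega^{2j+1}$ with $\omega=e^{\pi i/b}$, this $b\times b$ period matrix factors as a Vandermonde matrix in the distinct points $(\omega^2)^{j-1}$ times an invertible diagonal matrix, hence is invertible. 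Therefore $\{[y^k\gamma]\}_{k=0}^{b-1}$ is a basis of $H^2({_IX_b};\C)$ and $\{[S_j]\}$ is a $\Q$-basis of $H_2({_IX_b};\Q)$, giving item~(3) and the $H^2$ part of~(4). Moreover $[\gamma]$ takes the rational value $1/b$ on each $[S_j]$, so $[\gamma]\in H^2({_IX_b};\Q)$; it restricts to a generator of $H^2((\C^\ast)^2;\Z)$, hence lies in $F^2$ and is nonzero in $\Gr^W_4$, so $\Q[\gamma]\cong\Q(-2)$ is a sub-MHS complementary to $W_2H^2\cong\Q(-1)^{b-1}$ and the MHS is split over $\Q$. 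Finally $(2\pi i)[y^k\gamma]$ takes values $\tfrac1k(y_j^{\,k}-y_{j-1}^{\,k})\in\Q(\omega)$ on the $[S_j]$, so lies in $H^2({_IX_b};\Q(e^{\pi i/b}))$, and $\int_{\{|x|=|y|=1\}}y^k\gamma=\tfrac{1}{2\pi i}\int_{|y|=1}y^{k-1}dy=0$ for $k\ge1$, so $[y^k\gamma]$ vanishes on $X^\circ=(\C^\ast)^2$ and hence lies in $W_2H^2$; being independent, these $b-1$ classes span it. This completes all five items.
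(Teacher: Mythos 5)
Your proof is correct. Note that the paper does not actually prove this proposition: it is quoted from Lam--Speyer (\cite[Proposition 7.1]{LS}), with only two ingredients recorded locally, namely the regular extension of $\gamma$ (Remark \ref{2000}, done there via Hartogs rather than your local computation $y-\zeta = xx'\cdot(\mathrm{unit})$, both of which work) and the period integrals $\int_{S_j}y^c\gamma$ (Proposition \ref{toy lemma}), which you rederive correctly and which match the paper's formulas exactly. Your route --- the conic-bundle picture over $\C^\ast_y$ for the necklace retraction, the Gysin sequence for $X^\circ=(\C^\ast)^2\hookrightarrow {_IX_b}$ with complement $b$ affine lines to pin down the weight filtration, and the Vandermonde factorization of the period matrix to identify generators and the $\Q$-structure --- is a clean, self-contained reconstruction; the only points that deserve the care you already flagged are the joint continuity of the fiberwise retractions across the nodal fibers (standard, and consistent with the fibration $g$ of Proposition \ref{diffeo}) and the identification of the cluster variety with the full affine surface in item (1), which the paper justifies in the Type II case by acyclicity via \cite[Corollary 1.21]{BFZ} and which applies verbatim here.
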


\begin{remark} \label {2000}
When $x\ne0$, $\gamma=(2\pi i)^{-2}d\log x\wedge d\log y$. When $x'\ne0$, $\gamma=-(2\pi i)^{-2}d\log x\wedge d\log y$. They agree on the overlap since $x'dx+xdx'=by^{b-1}$ implies $d\log x\wedge d\log y+d\log x'\wedge d\log y=0$. Then $\gamma$ extends to $X_b$ by Hartogs' theorem because the locus $x=x'=0$ is of codimension 2.
\end{remark}

For later use, we need the following statements in the proof of \cite[Theorem 7.1]{LS}. 

\begin{proposition} \label{toy lemma}
For $j=1,\cdots, b$, and $c=1,\cdots, b-1$, we have 
\[
\int_{S_j}\gamma=\frac{1}{b}
\]
and
\[
\int_{S_j}y^c\gamma=\frac{1}{2\pi ic}(e^{(2j+1)c\pi i/b}-e^{(2j-1)c\pi i/b}).
\]
\end{proposition}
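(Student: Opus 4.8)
The plan is to work with the explicit parametrization of the necklace spheres $S_1,\dots,S_b$ from \propositionref{hodge}(2), reducing the first integral to a one-line pullback computation and the second to Stokes' theorem after excising small disks around the two nodes of each sphere.

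First I would fix coordinates on $S_j$. On the locus $|x|=|x'|$, $|y|=1$, write $y=e^{i\theta}$; then $|x|^2=|xx'|=|y^b+1|$, so $S_j$ is the image of the map $(\theta,\phi)\mapsto(x,x',y)$ with $x=|y^b+1|^{1/2}e^{i\phi}$, $x'=(y^b+1)/x$, $y=e^{i\theta}$, where $\theta$ runs over the closed arc from $\theta_j^-=(2j-1)\pi/b$ to $\theta_j^+=(2j+1)\pi/b$ and $\phi\in\R/2\pi\Z$. The fiber circle collapses to the node $x=x'=0$ exactly at the two endpoints $\theta=\theta_j^{\pm}$ (where $y^b+1=0$); as $_IX_b$ is smooth and these are interior points, this exhibits $S_j$ as a smooth $2$-sphere, and away from the two nodes $x\ne0$, so $\gamma=(2\pi i)^{-2}d\log x\wedge d\log y$ there (Remark~\ref{2000}). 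Orient $S_j$ by $d\phi\wedge d\theta$, the orientation making the first integral positive.

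For $\int_{S_j}\gamma$: pulling back, $d\log y=i\,d\theta$ and $d\log x=\tfrac12\,d\log|y^b+1|+i\,d\phi$; since $\log|y^b+1|$ is a function of $\theta$ alone on $S_j$, its differential is a multiple of $d\theta$ and wedges to zero with $d\log y$, so $\gamma|_{S_j}=(2\pi i)^{-2}\,d\theta\wedge d\phi$ and integrating over the parameter rectangle of $\theta$-width $\theta_j^+-\theta_j^-=2\pi/b$ and $\phi$-width $2\pi$ gives $\int_{S_j}\gamma=\tfrac1b$. For $\int_{S_j}y^c\gamma$ with $1\le c\le b-1$ the key identity is
\[
y^c\gamma=-\frac{1}{c(2\pi i)^2}\,d\bigl(y^c\,d\log x\bigr),
\]
valid wherever $x\ne0$ (use $d(d\log x)=0$ and $d(y^c)\wedge d\log x=c\,y^{c-1}\,dy\wedge\tfrac{dx}{x}$), hence on $S_j$ minus its two nodes. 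I then excise the disks $\{\theta_j^+-\epsilon\le\theta\le\theta_j^+\}$ and $\{\theta_j^-\le\theta\le\theta_j^-+\epsilon\}$, apply Stokes on the remaining cylinder, and let $\epsilon\to0$. On a circle $\theta=\theta_0$ one has $d\log x=i\,d\phi$ (the $d\log|y^b+1|$ term drops) and $y^c=e^{ic\theta_0}$ is constant, so $\oint_{\theta=\theta_0}y^c\,d\log x=2\pi i\,e^{ic\theta_0}$; combining the contributions of the two boundary circles with their induced orientations and using $-\tfrac{1}{c(2\pi i)^2}\cdot 2\pi i=-\tfrac{1}{2\pi ic}$ gives
\[
\int_{S_j}y^c\gamma=\frac{1}{2\pi ic}\bigl(e^{ic\theta_j^+}-e^{ic\theta_j^-}\bigr)=\frac{1}{2\pi ic}\bigl(e^{(2j+1)c\pi i/b}-e^{(2j-1)c\pi i/b}\bigr),
\]
as claimed; the telescoping identity $\sum_{j=1}^{b}\int_{S_j}y^c\gamma=0$ is a convenient sanity check.

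The main obstacle is the orientation bookkeeping: one must verify that $S_j$ is genuinely a smooth $2$-sphere with each node an interior smooth point (so the excised pieces are honest disks and Stokes applies), keep track of the sign of $\gamma$ on the chart $x\ne0$, and match the induced boundary orientations of the two circles against the fixed orientation of $S_j$, so that the two residue-type contributions combine with exactly the stated signs rather than with an ambiguous $\pm$. The passage to the limit $\epsilon\to0$ is itself harmless, since $y^c\,d\log x$ restricts to a smooth $1$-form on each constant-$\theta$ circle and depends continuously on $\theta$ up to $\theta_j^{\pm}$.
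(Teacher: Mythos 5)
Your computation is correct, and it checks out sign by sign. The paper itself does not prove this proposition --- it imports the two integral formulas from the proof of \cite[Theorem 7.1]{LS} --- so what you have written is a self-contained verification rather than a variant of an argument in the text. Your key identity $d(y^c\,d\log x)=c\,y^c\,d\log y\wedge d\log x=-c\,(2\pi i)^2\,y^c\gamma$ is right, the induced boundary orientations of the two excised circles do combine to give $\frac{1}{2\pi ic}\bigl(e^{ic\theta_j^+}-e^{ic\theta_j^-}\bigr)$ with your orientation $d\phi\wedge d\theta$ on $S_j$, and that same orientation yields $\int_{S_j}\gamma=1/b$, so the two formulas are consistent with a single choice of fundamental class $[S_j]$ (the statement leaves this normalization implicit, and yours is the only reading compatible with $\int_Z\gamma=1$ as used in Proposition~\ref{w}). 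Two small remarks. First, the Stokes detour is not actually needed: the pullback of $y^c\gamma$ to the closed parameter rectangle is $-(2\pi i)^{-2}e^{ic\theta}\,d\phi\wedge d\theta$, which is smooth up to the boundary, so you can integrate $e^{ic\theta}$ in $\theta$ directly over $[\theta_j^-,\theta_j^+]$ and get the same answer with no excision or boundary-orientation bookkeeping; the $c=0$ and $c\ge1$ cases are then handled uniformly. Second, your smoothness claim at the nodes is fine and worth the sentence you give it: near a node one can use $x$ as a local parameter on $S_j$, since $\rho=|x|$ determines $\theta$ smoothly through $\cos(b\theta)=\rho^4/2-1$ and $x'=\bar{x}\,u(|x|^2)$ for a smooth unimodular $u$, so the branch of $Z$ through the node is an embedded disk.
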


\subsection{Type II}
The geometry, cohomology and mixed Hodge structures of $_{II}X_b$ are described in the following proposition.

\begin{proposition} \label{hodge2}
Let $_{II}X_b$ be the 2D cluster variety defined by the exchange matrix
\[
\left(\begin{array}{cc}0&b\\-b&0\end{array}\right),
\]
where $b$ is a positive integer. We have the following.
\begin{enumerate}
\item $_{II}X_b$ is an affine surface obtained by gluing two copies of $_{I}X_b$ along $X_0$. More precisely, we have
\[
_{II}X_b=\{(x,y,x',y')\mid xx'=y^b+1, yy'=x^b+1\}\subset \C^4.
\]
It is covered by open sets $U_b=\{(x,y,x',y')\in {_{II}X_b}\mid y\ne0\}\cong{_IX_b}$ and $V_b=\{(x,y,x',y')\in {_{II}X_b}\mid x\ne0\}\cong{_IX_b}$, whose overlap is exactly $\{(x,y,x',y')\in{_{II}X_b}\mid x\ne0,y\ne0\}\cong X_0$.
\item The locus of $|x|=|x'|,|y|=1$  is $b$ 2-spheres $S_1,\cdots,S_b$ arranging as a necklace. The locus of $|y|=|y'|,|x|=1$ is also $b$ spheres $T_1,\cdots,T_b$ arranging as a necklace. Then $H_2({_{II}X_b},\Q)$ is generated by $S_1,\cdots,S_b,T_1,\cdots, T_b$ with relation $S_1+\cdots+S_b=T_1+\cdots+T_b$. 
\item $H^2({_{II}X_b};\Q)$ is generated by $\gamma,x\gamma,\cdots,x^{b-1}\gamma,y\gamma,\cdots,y^{b-1}\gamma$, where $\gamma$ is the unique extension of $(2\pi i)^{-2}d\log x\wedge d\log y$.
\item The mixed Hodge numbers of $_{II}X_b$ are
\begin{center}
\begin{tabular}{cccccc}
\hline
 & $\Gr_0^W$ &$ \Gr_1^W$ & $\Gr_2^W $&$ \Gr_3^W$ & $\Gr_4^W$\\
\hline
$H^0$ & $1$ & $0$ & $0$ & $0$ & $0$ \\
\hline
$H^1$ & $0$ & $0$ & $0$ & $0$ & $0$\\
\hline
$H^2$ & $0$ & $0$ & $2b-2$ & $0$ & $1$\\
\hline
\end{tabular}
\end{center}
More precisely, $\gamma$ generates $\Gr_4^WH^2(X)$, where $\gamma$ is the unique extension of $(2\pi i)^{-2}d\log x\wedge d\log y$. The subspace spanned by $x\gamma,\cdots,x^{b-1}\gamma,y\gamma,\cdots,y^{b-1}\gamma$ in $H^2(_{II}X_b;\C)$ has a $\Q$-basis, and their $\Q$-span is $W_2H^2(_{II}X_b;\Q).$ 
\end{enumerate}
\end{proposition}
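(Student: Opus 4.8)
The plan is to reduce everything to the two charts $U_b\cong {}_IX_b$ and $V_b\cong {}_IX_b$ and push \propositionref{hodge} and \propositionref{toy lemma} through a Mayer--Vietoris argument. For (1), I would run the two cluster mutations on the seed with cluster $(x,y)$ and exchange matrix $\left(\begin{smallmatrix}0&b\\-b&0\end{smallmatrix}\right)$; the exchange relations come out as $xx'=y^b+1$ and $yy'=x^b+1$, and since every further cluster variable is a Laurent polynomial in $x,y$ on this surface, $_{II}X_b$ is exactly $\{xx'=y^b+1,\ yy'=x^b+1\}\subset\C^4$. As $x=y=0$ would force $1=y^b+1=xx'=0$, the surface is covered by $U_b=\{y\neq 0\}$ and $V_b=\{x\neq 0\}$; on $U_b$ the coordinate $y'=(x^b+1)/y$ is determined, so $(x,y,x',y')\mapsto(x,x',y)$ gives $U_b\cong {}_IX_b$, symmetrically $V_b\cong {}_IX_b$, and $U_b\cap V_b=\{x\neq0,\ y\neq0\}=X_0$.

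For (2) I would use the Mayer--Vietoris sequence of $_{II}X_b=U_b\cup V_b$ with $U_b\cap V_b=X_0$. The key observation is that
\[
H^1(X_0;\Q)=\Q[d\log x]\oplus\Q[d\log y]\ \longrightarrow\ H^1(U_b;\Q)\oplus H^1(V_b;\Q)=\Q\oplus\Q
\]
is an isomorphism: on $U_b\cong {}_IX_b$ the class $d\log x$ is $0$ (it pairs trivially with the generating cycle, by \propositionref{hodge}(4)) while $d\log y$ generates, and on $V_b$ the roles are exchanged. This forces $H^1({}_{II}X_b)=0$ and identifies $H^2({}_{II}X_b)$ with $\operatorname{coker}\bigl(H_2(X_0)\to H_2(U_b)\oplus H_2(V_b)\bigr)$, equivalently realizes $H^2({}_{II}X_b)$ as $\ker\bigl(H^2(U_b)\oplus H^2(V_b)\to H^2(X_0)\bigr)$. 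Comparing $\int$-pairings against the basis $\gamma,y\gamma,\dots,y^{b-1}\gamma$ of $H^2(U_b;\C)$ using \propositionref{toy lemma} (the $\gamma$-integrals agree, $1=b\cdot\tfrac1b$, and the $y^c\gamma$-integrals vanish, telescoping on the $S_j$ side and $\oint_{|y|=1}y^{c-1}dy=0$ on the torus side) shows that the torus class of $X_0$ maps to $S_1+\dots+S_b$ in $H_2(U_b)$ and to $T_1+\dots+T_b$ in $H_2(V_b)$, which gives the description of $H_2$ in (2).

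For (3): the $2b-1$ forms $\gamma,x\gamma,\dots,x^{b-1}\gamma,y\gamma,\dots,y^{b-1}\gamma$ are regular $2$-forms on $_{II}X_b$ — $\gamma$ by Hartogs as in Remark~\ref{2000} applied in both charts, and $x^c\gamma$ because on $V_b\cong {}_IX_b$ it is of the type covered by \propositionref{hodge}(4), while on $U_b$ its only candidate pole, along $\{x'=0\}$, is cancelled by the vanishing of $y^b+1$ there, as one checks in the local coordinates $(x,x')$. Being holomorphic $2$-forms they are closed, and I would prove linear independence by computing their period matrix against $S_1,\dots,S_b,T_1,\dots,T_b$: \propositionref{toy lemma} supplies $\int_{S_j}\gamma$ and $\int_{S_j}y^c\gamma$, while parametrizing $S_j$ by $y=e^{i\theta}$ and $x=|e^{ib\theta}+1|^{1/2}e^{i\phi}$ gives $\int_{S_j}x^c\gamma=0$ for $1\le c\le b-1$, the integrand being a multiple of $e^{ic\phi}\,d\phi\wedge d\theta$. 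The resulting $(2b-1)\times 2b$ matrix has rank $2b-1$ because its rows involve the vectors $(1,\dots,1)$ and $\bigl(e^{2\pi i cj/b}\bigr)_{j=1}^b$, $c=1,\dots,b-1$, which are linearly independent (Vandermonde in the $b$-th roots of unity); hence these forms are a basis of $H^2({}_{II}X_b;\C)$.

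For (4), Mayer--Vietoris is an exact sequence of mixed Hodge structures, so the injection $H^2({}_{II}X_b)\hookrightarrow H^2(U_b)\oplus H^2(V_b)$ is a strict morphism of MHS with image $\ker\rho$, where $\rho\colon H^2(U_b)\oplus H^2(V_b)\to H^2(X_0)=\Q(-2)$. Since $\rho$ lands in a pure weight-$4$ structure it kills weight $2$, so $W_2H^2(U_b)\oplus W_2H^2(V_b)\subseteq\ker\rho$, while on the weight-$4$ part $\rho$ is the surjection $\C\gamma_U\oplus\C\gamma_V\to\C$ with one-dimensional kernel; this gives $\dim\Gr^W_2H^2=2b-2$, $\dim\Gr^W_4H^2=1$ generated by $[\gamma]$ (it restricts to the weight-$4$ generator $\gamma_U$ on $U_b$), and vanishing of all remaining graded pieces because they vanish on $U_b$ and $V_b$ by \propositionref{hodge}(5). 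To see $x^c\gamma,y^c\gamma$ ($1\le c\le b-1$) are of weight $2$, by strictness it suffices that their restrictions lie in $W_2$ of each chart: $x^c\gamma|_{V_b}\in W_2H^2(V_b)$ by \propositionref{hodge}(5) applied to $V_b$, and $x^c\gamma|_{U_b}\in W_2H^2(U_b)$ because, identifying $H^2(U_b)\cong H_2(U_b;\Q)^\vee$, the weight-$2$ subspace is the hyperplane $\{\eta:\int_{S_1+\dots+S_b}\eta=0\}$, which contains $x^c\gamma|_{U_b}$ by the period vanishing above (symmetrically for $y^c\gamma$). Thus $x^c\gamma,y^c\gamma$ span $W_2H^2({}_{II}X_b;\C)$ and $W_2H^2({}_{II}X_b;\Q)\cong W_2H^2(U_b;\Q)\oplus W_2H^2(V_b;\Q)$ under the $\Q$-defined injection, giving the asserted $\Q$-basis. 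The main obstacle is exactly this last point: the holomorphic $2$-forms $x^c\gamma$ naively look like top-weight ($\Gr^W_4$) classes, and that they are in fact of weight $2$ is invisible on $_{II}X_b$ itself — it must be imported from the $_IX_b$ computation via the explicit periods of \propositionref{toy lemma} and strictness of morphisms of mixed Hodge structures; everything else is Mayer--Vietoris and linear algebra.
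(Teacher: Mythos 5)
Your proof is correct and follows essentially the same route as the paper, which establishes (1) by citing \cite[Corollary 1.21]{BFZ} (acyclicity of the exchange quiver implies the cluster variety equals its lower bound) and disposes of (2)--(4) by exactly the Mayer--Vietoris argument for ${}_{II}X_b=U_b\cup V_b$ together with Proposition~\ref{hodge} that you carry out in detail, including the period computations and the strictness argument for the weight filtration. Two small points to tidy: the Laurent phenomenon alone does not show that the two exchange relations cut out the whole cluster variety (one needs the acyclicity result just cited), and your displayed $H^1$ arrow points in the homological direction while the justification speaks of restricting forms --- the intended isomorphism $H^1(U_b)\oplus H^1(V_b)\xrightarrow{\sim}H^1(X_0)$ (equivalently $H_1(X_0)\xrightarrow{\sim}H_1(U_b)\oplus H_1(V_b)$) is correct either way.
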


\begin{proof}
Since the graph associated with the exchange matrix is acyclic, $_{II}X_b$ is isomorphic to its lower cluster variety $\Spec \C[x,x',y,y']/(xx'-y^b-1,yy'-x^b-1)$; see \cite[Corollary 1.21]{BFZ}. 
Statements (2), (3) and (4) follow from Mayer-Vietoris arguments for the cover $_{II}X_b=U_b\cup V_b$ and Proposition \ref{hodge}.
\end{proof}

\section{Elliptic surfaces and perverse filtrations}
In this section, we study the perverse filtrations associated with proper fibration of an surface over the unit disk. They will be the $P$ sides in the $P=W$ identities. To use the perverse decomposition theorem \cite[Theorem 0.6]{Sa}, we will always assume the surface to be K\"ahler. 

\subsection{Type I} We give a candidate for the $P$ side which matches $_{I}X_b$ in the $P=W$ identity. Let $h:{_IY_b}\to \Delta$ be the elliptic fibration on the disk of type $I_b$, where $b$ is an positive integer. General fibers of $h$ are elliptic curves and the central fiber $h^{-1}(0)$ is $b$ 2-spheres arranged in a necklace. The monodromy matrix is 
\[
\left(
\begin{array}{cc}
1&b\\
0&1
\end{array}
\right).
\]
We refer to \cite[Section V]{BHPV} for more details. Denote $\Delta^\ast=\Delta\setminus \{0\}$ the punctured unit disk and $j:\Delta^\ast\hookrightarrow \Delta$ the open embedding. 

\begin{proposition} \label{perverse}
There is a perverse decomposition
\begin{equation}\label{0100}
Rh_*\Q_{_IY_b}[1]\cong (\Q_\Delta[1])\bigoplus (j_*L[1]\oplus \Q_0^{\oplus b-1})[-1]\bigoplus (\Q_\Delta[1])[-2],
\end{equation}
where $\Q_0$ is the skyscraper sheaf of rank 1 supported at 0, and $L$ is the $\Q$-local system of rank $2$ on $\Delta^\ast$ with the monodromy $\left(\begin{array}{cc}1&b\\0&1\end{array}\right)$. In particular, the perverse numbers are 
\begin{center}
\begin{tabular}{cccc}
\hline
 & $\Gr_0^P$ & $\Gr_1^P$ & $\Gr_2^P$\\
\hline
$H^0$ & $1$ & $0$ & $0$ \\
\hline
$H^1$ & $0$ & $1$ & $0$ \\
\hline
$H^2$ & $0$ & $b-1$ & $1$ \\
\hline
\end{tabular}
\end{center}
Furthermore, $P_1H^2(_IY_b)$ is spanned by the fundamental classes of the irreducible components of the central fiber $h^{-1}(0)$ and
\begin{equation}\label{0101}
P_1H^2(_IY_b)=\im \{H^2_c(_IY_b)\to H^2(_IY_b)\}.
\end{equation}
\end{proposition}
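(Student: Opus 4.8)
The plan is to run the decomposition theorem over $\Delta$ and extract everything from the summands. Since $_IY_b$ is K\"ahler, Saito's decomposition theorem \cite[Theorem 0.6]{Sa} applies to the proper map $h$ and splits $Rh_*\Q_{_IY_b}[1]$ into shifted semisimple perverse sheaves on $\Delta$, concentrated in perverse degrees $0,1,2$ because $h$ has relative dimension $1$. On the curve $\Delta$ each perverse summand is a direct sum of intermediate extensions $j_{!*}(M[1])$ of local systems $M$ on $\Delta^*$ and of skyscrapers $\Q_0^{\oplus a}$, so the task is to pin down these data, which I would do with proper base change used twice. Over $\Delta^*$, where $h$ is a smooth elliptic fibration, the cohomology sheaves of $Rh_*\Q_{_IY_b}$ are $\Q_{\Delta^*},L,\Q_{\Delta^*}$ in degrees $0,1,2$, where $L=j^*R^1h_*\Q_{_IY_b}$ has rank $2$ and is indecomposable because its monodromy is a single nontrivial unipotent Jordan block; hence the intermediate-extension parts of the perverse pieces in degrees $0,1,2$ must be $\Q_\Delta[1]$, $j_*L[1]$ (here $j_{!*}(L[1])=j_*L[1]$ as $\Delta$ is a curve), $\Q_\Delta[1]$. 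The stalk of $Rh_*\Q_{_IY_b}[1]$ at $0$ in degree $k$ is $H^{k+1}(h^{-1}(0);\Q)$, and $h^{-1}(0)$ is a necklace of $b$ rational curves with cohomology $\Q,\Q,\Q^{b}$ in degrees $0,1,2$; comparing stalks in every degree with the contributions of the pieces already identified, using $(j_*L)_0=L^{T}=\Q$ for the monodromy invariants, forces no skyscraper in perverse degrees $0$ and $2$ and exactly $\Q_0^{\oplus b-1}$ in perverse degree $1$ (consistent with relative hard Lefschetz). This gives \eqref{0100}.

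For the perverse numbers I would take hypercohomology of \eqref{0100}. The defect of $h$ is $r=\dim({_IY_b}\times_\Delta {_IY_b})-\dim {_IY_b}=1$, so $H^d({_IY_b})=\mathbb H^{d-1}(\Delta,Rh_*\Q_{_IY_b}[1])$, and since \eqref{0100} is an honest direct sum, $\Gr^P_kH^d({_IY_b})=\mathbb H^{d-1-k}(\Delta,{}^{\mathfrak p}\mathcal H^k(Rh_*\Q_{_IY_b}[1]))$. As $\Delta$ is contractible, $\mathbb H^*(\Delta,\Q_\Delta[1])$ is $\Q$ in degree $-1$, and $\mathbb H^*(\Delta,\Q_0^{\oplus b-1})$ is $\Q^{b-1}$ in degree $0$; for $j_*L$ one gets $H^0(\Delta,j_*L)=L^{T}=\Q$ and $H^1(\Delta,j_*L)=0$ from the Leray spectral sequence of $j$ together with the $1$-dimensionality of the monodromy invariants and coinvariants of $L$. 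Assembling these recovers the table; in particular $H^1({_IY_b})=\Q$, $H^2({_IY_b})=\Q^{b}$, $\Gr^P_1H^2=\Q^{b-1}$, $\Gr^P_2H^2=\Q$, and $P_1H^2({_IY_b})$ is exactly the $(b-1)$-dimensional direct summand of $H^2({_IY_b})$ coming from the skyscraper $\Q_0^{\oplus b-1}$ in perverse degree $1$.

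It remains to identify that summand with the two objects in the statement. For \eqref{0101}, use $Rh_!=Rh_*$ to write $H^2_c({_IY_b})=\mathbb H^2_c(\Delta,Rh_*\Q_{_IY_b})$ and decompose by \eqref{0100}. Piece by piece, the forgetful map $\mathbb H^2_c\to\mathbb H^2$ vanishes on the contributions of the two copies of $\Q_\Delta$ and of $j_*L$ (because $H^2(\Delta)=0$, $H^0_c(\Delta)=0$, and $H^1_c(\Delta,j_*L)=H^1(\Delta,j_*L)=0$, the vanishing of $H^1_c(\Delta,j_*L)$ following from Verdier duality $\mathbb D(j_*L[1])=j_*L^{\vee}[1]$ and the previous computation applied to $L^\vee$), whereas on the skyscraper summand it is the identity of $\Q^{b-1}$, since a skyscraper has $H^*_c=H^*$. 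Hence $\im\{H^2_c({_IY_b})\to H^2({_IY_b})\}$ is exactly the skyscraper summand, i.e. $P_1H^2({_IY_b})$. Finally, the fundamental classes of the irreducible components $C_1,\dots,C_b$ of $h^{-1}(0)$ lie in $\im\{H^2_c\to H^2\}$ (they are classes of compact cycles), and their mutual intersection numbers form the intersection matrix of the $I_b$ configuration, whose kernel is spanned by the fibre class $\sum_i[C_i]$ and which therefore has corank $1$; so the $[C_i]$ span a subspace of dimension $\ge b-1$ inside $\im\{H^2_c\to H^2\}=P_1H^2({_IY_b})$, hence all of it by the dimension count. (Equivalently, $_IY_b$ deformation retracts onto $h^{-1}(0)$, so the $[C_i]$ already generate $H_2({_IY_b};\Q)\cong H^2_c({_IY_b};\Q)$.)

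There is no single hard step; the argument orchestrates Saito's decomposition theorem with standard inputs (proper base change, the cohomology of $\Delta$ and of an $I_b$ fibre, the monodromy of an $I_b$ degeneration, Verdier duality). The part demanding the most care is the last paragraph: one must follow the lone skyscraper summand of \eqref{0100} through both $H^2_c$ and $H^2$ and verify that, among all the pieces, it is precisely the one surviving the map $H^2_c\to H^2$, the constant and $j_*L$ pieces dying on one side or the other.
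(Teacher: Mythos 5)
Your argument is correct and follows essentially the same route as the paper: invoke Saito's decomposition theorem for the proper K\"ahler morphism $h$, identify the summands over $\Delta^\ast$ and at $0$, compute $H^0(\Delta,j_*L)=\Q$ and $H^1(\Delta,j_*L)=0$ from the (co)invariants of the unipotent monodromy, and then analyze $H^2_c\to H^2$ summand by summand, with only the skyscraper piece surviving. One caveat: you describe Saito's theorem as splitting $Rh_*\Q[1]$ into shifted \emph{semisimple} perverse sheaves, which is false here --- $j_*L[1]=j_{!*}(L[1])$ is not semisimple precisely because $L$ is indecomposable but reducible (the paper's remark after the proposition makes exactly this point, as it is the obstruction to algebraicity of the fibration). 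What Saito's theorem actually provides in the K\"ahler setting is a decomposition into pure Hodge modules, which decompose by strict support into intermediate extensions of (not necessarily semisimple) local systems plus skyscrapers; since that is all your stalk-comparison argument uses, the proof goes through once the word ``semisimple'' is removed.
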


\begin{proof}
The perverse decomposition (\ref{0100}) follows from \cite[Theorem 0.6]{Sa} and \cite[Theorem 3.2.3]{dCM}. The monodromy of the local system $L$ follows from the property of elliptic fibration of type $I_b$. To compute the perverse numbers, we have the exact sequence
\begin{equation}\label{0009}
0\to j_*L\to Rj_*L\to R^1j_*L[-1]\to 0.
\end{equation}
By checking the stalks, we have $R^1j_*L=\Q_0$. \v Cech cohomology argument shows that $H^2(\Delta,j_*L)=0$, and $H^i(\Delta,Rj_*L)=H^i(\Delta^\ast,L)=\Q$ for $i=0,1$.
 So the associated long exact sequence of (\ref{0009}) is 
\[
\begin{array}{ccccccc}
0&\to&H^0(\Delta, j_*L)&\to&\Q&\to&0\\
 &\to & H^1(\Delta, j_*L)&\to &\Q&\to&\Q\\
 &\to&0& & & & \\
 \end{array}
\]
We conclude that $H^0(\Delta,j_*L)\cong \Q$ and $H^1(\Delta,j_*L)=0$. To see that $P_1H^2(Y)$ is spanned by the irreducible components of the singular fiber $h^{-1}(0)$, it suffices to note that the cohomology of the direct summand $\Q_0^{\oplus b-1}$ computes the fundamental classes of irreducible components of the singular fiber. It has dimension $b-1$ since the central fiber $h^{-1}(0)$, the sum of fundamental classes of all irreducible components, is cohomologous to $0$. To prove (\ref{0101}), it suffices to study the image of
$
\mathbb{H}^2_c(\Delta,Rh_*\Q_{_IY_b})\to\mathbb{H}^2(\Delta,Rh_*\Q_{_IY_b}).
$
The decomposition (\ref{0100}) breaks this map as the direct sum of the follow three maps
\begin{align}
&H^1_c(\Delta,j_*L)\to H^1(\Delta,j_*L),\label{0102}\\
&H^0_c(\Q_0^{\oplus b-1})\to H^0(\Q_0^{\oplus b-1}),\label{0103}\\
&H^0_c(\Delta,\Q_\Delta)\to H^0(\Delta,\Q_\Delta)\label{0104}.
\end{align}
Since (\ref{0102}) and (\ref{0104}) are zero maps, and (\ref{0103}) is an identity, we see that the image of $H^2_c({_IY_b})\to H^2({_IY_b})$ is exactly spanned by the fundamental classes of irreducible components of the central fiber, which is $P_1H^2({_IY_b})$. 
\end{proof}

\begin{remark}
A natural question is whether the elliptic fibration of type $I_b$ can be realized as ambient complex manifolds of a proper morphism between algebraic varieties. The answer is no. In fact, it follows from the decomposition theorem \cite{BBD} that the perverse sheaves in the perverse decomposition between algebraic varieties must be semisimple. However, the summand $j_*L[1]$ is not. 
\end{remark}

\subsection{Type II} We give a candidate for the $P$ side which matches $_{II}X_b$ in the $P=W$ identity. Let $h:{_{II}Y_b}\to\Delta$ be an elliptic fibration over unit disk with 2 singular fibers $h^{-1}(p_1)$ and $h^{-1}(p_2)$ both of type $I_b$, whose vanishing cycles generate the $H^1$ of a general fiber. The same argument as Proposition \ref{perverse} leads to the following proposition.

\begin{proposition} \label{perverse2}
  We have a perverse decomposition
\[
Rh_\ast\Q_Y[1]\cong(\Q_\Delta[1])\bigoplus(j_\ast L[1]\oplus\Q_{p_1}^{\oplus b-1}\oplus \Q_{p_2}^{\oplus b-1})[-1]\bigoplus(\Q_\Delta[1])[-2],
\]
where $L$ is the rank 2 $\Q$-local system on $\Delta\setminus\{p_1,p_2\}$, whose monodromy matrices around $p_1$ and $p_2$ are $\left(
\begin{array}{cc}
1&b\\
0&1
\end{array}
\right)
$ and
$
\left(
\begin{array}{cc}
1&0\\
-b&1
\end{array}
\right)
$, respectively. In particular, the perverse numbers are
\begin{center}
\begin{tabular}{cccc}
\hline 
           & $\Gr_0^P$ & $\Gr_1^P$ & $\Gr_2^P$ \\
\hline
$H^0$ & $1$            &    $0$         &$0$            \\
\hline
$H^1$ & $0$            &   $0$          &$0$\\
\hline
$H^2$ & $0$            &   $2b-2$     & $1$ \\
\hline
\end{tabular}
\end{center}
Furthermore, $P_1H^2({_{II}Y_b})$ is spanned by the fundamental classes of the irreducible components of the singular fibers $h^{-1}(p_1)$ and $h^{-1}(p_2)$. 
\end{proposition}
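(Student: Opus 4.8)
The plan is to mimic the proof of \propositionref{perverse} step by step, replacing the single critical point $0$ by the two critical points $p_1,p_2$, and keeping careful track of the rank-$2$ local system $L$ whose two local monodromies are now prescribed. First I would invoke \cite[Theorem 0.6]{Sa} together with \cite[Theorem 3.2.3]{dCM} to produce the perverse decomposition of $Rh_\ast\Q_Y[1]$; the general shape is forced by the fact that $h$ is an elliptic fibration with exactly two singular fibers, each of type $I_b$, so the constructible part decomposes as $j_\ast L[1]$ (for the open dense local system) plus skyscraper contributions $\Q_{p_i}^{\oplus(\text{number of components of }h^{-1}(p_i))-1}=\Q_{p_i}^{\oplus b-1}$ at each critical value. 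The identification of the two local monodromy matrices $\left(\begin{smallmatrix}1&b\\0&1\end{smallmatrix}\right)$ and $\left(\begin{smallmatrix}1&0\\-b&1\end{smallmatrix}\right)$ comes from the Picard--Lefschetz/Kodaira theory of $I_b$ fibers together with the hypothesis that the two vanishing cycles span $H^1$ of a general fiber — this is exactly what pins down the relative position of the two unipotent monodromies (they cannot share a common invariant line, which is why the $H^1$ of the total space vanishes).

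Next I would compute the hypercohomology of each summand over $\Delta$ to fill in the table of perverse numbers. The summands $\Q_\Delta[1]$ and $\Q_\Delta[1][-2]$ contribute $\Q$ in degrees $0$ and $2$ respectively, giving the $\Gr_0^P H^0$ and $\Gr_2^P H^2$ entries. Each skyscraper $\Q_{p_i}^{\oplus b-1}[-1]$ contributes $\Q^{b-1}$ to $H^2$, for a total of $2b-2$ in $\Gr_1^P H^2$. The only real computation is $H^\ast(\Delta, j_\ast L)$: as in \propositionref{perverse} I would use the short exact sequence $0\to j_\ast L\to Rj_\ast L\to R^1 j_\ast L[-1]\to 0$, note that $R^1 j_\ast L$ is now a sum of skyscrapers $\Q_{p_1}\oplus\Q_{p_2}$ (one from each unipotent local monodromy, each of whose invariants-of-$L$ has corank $1$, so $R^1$ has rank $1$ at each $p_i$), and that $H^\ast(\Delta^\ast\setminus\{\text{the other point}\}, L)=H^\ast(\text{twice-punctured disk}, L)$ can be computed from the presentation of $\pi_1$ of the twice-punctured disk as a free group on two loops $\gamma_1,\gamma_2$. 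The key point is that $H^0(\Delta^\ast\setminus\{p_i\},L)=L^{\langle\gamma_1,\gamma_2\rangle}=0$ because the two monodromies have no common invariant vector (the vanishing cycles span), and then a \v Cech/Euler-characteristic bookkeeping forces $H^1(\Delta, j_\ast L)=0$ as well. This is what produces the vanishing of the middle row ($\Gr_1^P H^1 = 0$) and the clean table.

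For the last assertion, that $P_1 H^2({_{II}Y_b})$ is spanned by the fundamental classes of the irreducible components of $h^{-1}(p_1)$ and $h^{-1}(p_2)$: the classes coming from the skyscraper summands $\Q_{p_1}^{\oplus b-1}\oplus\Q_{p_2}^{\oplus b-1}$ are exactly the fundamental classes of irreducible components, modulo the two relations saying each full fiber is cohomologous to the (constant) fiber class — which is why we get $b-1$ rather than $b$ from each. Since $P_1 H^2$ is the image of $\mathbb{H}^2\big(\Delta, {}^{\mathfrak p}\tau_{\le 1}Rh_\ast\Q_Y[1]\big)$ and the only summands contributing to $H^2$ in perverse degree $\le 1$ are $j_\ast L[1][-1]$ and the skyscrapers, it remains to observe that $H^2(\Delta, j_\ast L[1][-1])=H^1(\Delta, j_\ast L)=0$ by the computation above; hence $P_1 H^2$ is precisely the span of the skyscraper classes, i.e. of the fundamental classes of components of the two singular fibers.

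The main obstacle I expect is the honest computation of $H^\ast(\Delta, j_\ast L)$ for the twice-punctured disk with these two specific unipotent monodromies — in particular verifying $H^1(\Delta, j_\ast L)=0$, which is the input that kills $H^1$ of the total space and is genuinely new compared to the $I_b$ case (where the single puncture left a nonzero $H^0$ but still $H^1=0$). Everything else is a transcription of \propositionref{perverse}, so I would write "the same argument as \propositionref{perverse}" and only spell out the local-system cohomology computation and the fact that the two vanishing cycles being a basis of $H^1$ of the general fiber is equivalent to the two monodromies having no common nonzero fixed vector.
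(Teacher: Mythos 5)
Your proposal is correct and follows essentially the same route as the paper, which itself only says ``the same argument as Proposition \ref{perverse}'' and leaves the details to the reader. The one genuinely new computation you identify --- that $H^0(\Delta,j_*L)=H^1(\Delta,j_*L)=0$ for the twice-punctured disk because the two unipotent monodromies share no invariant vector, combined with the Euler-characteristic count $\chi(\Delta,j_*L)=-2+1+1=0$ --- is exactly what is needed, and your handling of it and of the final assertion about $P_1H^2$ is sound.
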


\section{Construction of diffeomorphisms}
On the contrary to the $P=W$ identities studied in \cite{dCHM}, \cite{S}, \cite{SZ} and \cite{Z}, we don't have moduli interpretations for cluster varieties and elliptic fibrations in Section 2 and 3. So we need to construct diffeomorphisms between them to establish the $P=W$ identities. The constructions are based on handlebody decompositions. As we will see in Section 5, the $P=W$ identities do not depend the choice of the diffeomorphism, which is not obvious a priori.

\subsection{Type I} In this section, we construct a diffeomorphism $f:{_IX_b}\to{_IY_b}$.  We will realize the cluster variety $_IX_b$ as the total space of a Lefschetz $T^2$-fibration over $\R^2$ which shares the same combinatorial information as the corresponding elliptic fibration $h:{_IY_b}\to\Delta$. Monodromy invariants and the covering trick will produce a desired diffeomorphism between $_IX_b$ and $_IY_b$. 
\begin{proposition}  \label{diffeo}
Let $_IX_b$ be the surface $xx'=y^b+1$ in $\C^2\times\C^\ast$ as in Section 2.1. Consider the smooth map $g:{_IX_b}\to\R^2$ defined as follows.
$$
\begin{array}{rclc}
g:&{_IX_b}&\to& \R^2\\
 &(x,x',y)&\mapsto& (|x|^2-|x'|^2,\log|y|).
\end{array}
$$
The fiber over $(0,0)$ is a necklace consists of $b$ 2-spheres and all other fibers are 2-tori. 
\end{proposition}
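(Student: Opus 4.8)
The plan is to compute each fiber $g^{-1}(s,t)$ directly from the two equations cutting out $_IX_b$. First, $\log|y|=t$ forces $y$ onto the circle $C_t:=\{\,|y|=e^{t}\,\}\subset\C^{\ast}$, so $g^{-1}(s,t)$ fibers over $C_t$ via the $y$-coordinate. For fixed $y\in C_t$ write $w:=y^{b}+1$ and solve $xx'=w$, $|x|^{2}-|x'|^{2}=s$ for $(x,x')$: if $w\neq0$ then $x\neq0$, $x'=w/x$, and setting $u:=|x|^{2}$ the second equation becomes $u^{2}-su-|w|^{2}=0$, with unique positive root $u(y,s)=\tfrac12\bigl(s+\sqrt{s^{2}+4|w|^{2}}\bigr)$; thus $|x|$ is pinned down, $\arg x\in\R/2\pi\Z$ is free, and the slice over $y$ is a single circle. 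If $w=0$ — which on $C_t$ occurs only when $t=0$ and $y$ is one of the $b$-th roots $\zeta_{k}=e^{(2k-1)\pi i/b}$ of $-1$ — then $xx'=0$ gives a circle when $s\neq0$ (namely $x'=0$, $|x|=\sqrt{s}$ for $s>0$, and symmetrically for $s<0$) and the single point $x=x'=0$ when $s=0$.

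Next I would assemble the slices. If $t\neq0$ the circle $C_t$ misses every $\zeta_k$, so $w(y)\neq0$ and $u(y,s)>0$ vary smoothly over $C_t$; then $(x,x',y)\mapsto(y,\,x/|x|)$ and $(y,\omega)\mapsto\bigl(\sqrt{u(y,s)}\,\omega,\ w(y)/(\sqrt{u(y,s)}\,\omega),\ y\bigr)$ are mutually inverse diffeomorphisms between $g^{-1}(s,t)$ and $C_t\times S^{1}$, so the fiber is a $2$-torus. If $t=0$ and $s\neq0$, I would first apply the involution $(x,x',y)\mapsto(x',x,y)$ of $_IX_b$, which replaces the first coordinate of $g$ by its negative and hence maps $g^{-1}(s,0)$ onto $g^{-1}(-s,0)$, to reduce to $s>0$; in that case $u(y,s)\ge s>0$ for all $y\in C_0$ (with equality exactly at the $\zeta_k$, where $x'=0$), so the formula $|x|=\sqrt{u(y,s)}$, $x'=w(y)/x$ describes $g^{-1}(s,0)$ uniformly and smoothly and the same map trivializes it as $C_0\times S^{1}\cong T^{2}$.

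It remains to analyze the central fiber $g^{-1}(0,0)=\{\,|x|=|x'|,\ |y|=1\,\}$. I would cut $C_0$ into the $b$ closed arcs $\bar A_{k}$ joining consecutive roots $\zeta_{k}$ and $\zeta_{k+1}$ (indices mod $b$) and set $S_{k}:=g^{-1}(0,0)\cap\{y\in\bar A_{k}\}$. Over the interior of $\bar A_{k}$ one has $w(y)\neq0$, so $S_{k}$ there is the trivial circle bundle $(x,x',y)\mapsto(y,x/|x|)$, an open cylinder; writing $y=e^{i\psi}$, the common radius is $|x|=|x'|=\sqrt{|w(y)|}=\sqrt{2\,|\cos(b\psi/2)|}$, which tends to $0$ as $y$ approaches either endpoint, so the two boundary circles of the cylinder collapse to the points $(0,0,\zeta_{k})$ and $(0,0,\zeta_{k+1})$. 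Hence each $S_{k}$ is a suspension of $S^{1}$, i.e.\ a $2$-sphere; $S_{k}\cap S_{k+1}=\{(0,0,\zeta_{k+1})\}$ and there are no other intersections; so $g^{-1}(0,0)=\bigcup_{k}S_{k}$ is a necklace of $b$ spheres — indeed the necklace $S_{1},\dots,S_{b}$ of \propositionref{hodge}(2), since both equal the locus $|x|=|x'|$, $|y|=1$. (In particular, $g$ is surjective.)

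The only delicate point is the behavior at the nodal points $(0,0,\zeta_{k})$: one must check that $g^{-1}(0,0)$ really looks there like two smooth disks crossing at a point — equivalently, that $(0,0)$ is a critical value of $g$ and the central fiber is a nodal rather than smooth fiber — and not something more degenerate. This comes out of the estimate $|w(e^{i\psi})|=2|\cos(b\psi/2)|\sim b\,|\psi-\psi_{k}|$ near $\zeta_{k}$ together with the smoothness of $_IX_b$ at these points noted in Remark~\ref{2000}. Everything else is a routine unwinding of $xx'=y^{b}+1$, $|x|^{2}-|x'|^{2}=s$, $|y|=e^{t}$.
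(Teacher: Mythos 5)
Your proof is correct and follows essentially the same route as the paper: project each fiber to the circle $|y|=e^{t}$, solve the quadratic $u^{2}-su-|w|^{2}=0$ for $|x|^{2}$ to see that the generic slices are circles, and collapse the boundary circles over the $b$-th roots of $-1$ to get the necklace of spheres in the central fiber. Your explicit trivializations (which rule out a Klein bottle) and the separate treatment of the case $t=0$, $s\neq0$ are slightly more careful than the paper's, but the argument is the same.
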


\begin{proof}
Denote $X_{s,t}=g^{-1}(s,t)$ the fiber of $g$ over point $(s,t)\in \R^2$. Then $X_{s,t}$ is cut out by equations $|x|^2-|x'|^2=s$ and $|y|=e^t$. To study $X_{s,t}$, we consider map 

$$
\begin{array}{rccc}
p_{s,t}:& X_{s,t}&\to& \C^\ast\\
 & (x,x',y)&\mapsto & y 
\end{array}
$$

\begin{enumerate}
\item When $s=t=0$, $X_{0,0}$ is cut out by equations $|x|=|x'|$ and $|y|=1$. So the image of $p_{0,0}$ is the unit circle. When $y$ is not a $b$-th root of $-1$, we have 
\[
\begin{cases}
x=|\sqrt{y^b+1}|e^{i\theta}\\
x'=\displaystyle\frac{y^b+1}{x},
\end{cases}
\]
so $p_{0,0}^{-1}(y)$ is a circle parametrized by $\theta$. When $y$ is a $b$-th root of $-1$, we have $x=x'=0$. Therefore, $X_{0,0}$ is obtained by gluing $b$ 2-spheres in a circle. 
\item When $(s,t)\ne(0,0)$, the image $p_{s,t}(X_{s,t})$ is a circle $|y|=e^t$. We claim that for any $y$ on the circle $|y|=e^t$, the fiber $p_{s,t}^{-1}(y)$ is also a circle. In fact, we have $|x|^2-|x'|^2=s$ and $|xx'|^2=|y^b+1|^2$, so 
\[
|x|=\sqrt{\frac{s+\sqrt{s^2+4|y^b+1|^2}}{2}}
\]
and
\[
|x'|=\sqrt{\frac{-s+\sqrt{s^2+4|y^b+1|^2}}{2}}.
\]
Note that when $(s,t)\neq(0,0)$, at least one of $|x|,|x'|$ is nonzero, and that $x$ and $x'$ determine each other since $y$ is fixed. So we get $p_{s,t}^{-1}(y)$ is a circle. Therefore $X_{s,t}$ is a topological circle bundle over a circle, hence is a torus.
\end{enumerate}
\end{proof}

The existence of a diffeomphism between $_IX_b$ and $_IY_b$ follows from handlebody decompositions on 4-manifolds. We refer to \cite{GS,HKK,KM} for definitions of Lefschetz fibrations, handlebody decompositions and plumbing constructions.

\begin{theorem}\label{diff}
There exists a diffeomorphism $f:{_IX_b}\to {_IY_b}$.
\end{theorem}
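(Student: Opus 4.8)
The plan is to exhibit both $_IX_b$ and $_IY_b$ as total spaces of genus-one Lefschetz fibrations over a disk carrying the \emph{same} monodromy factorization, and then to conclude that the total spaces are diffeomorphic by Kas' theorem that the total space of a Lefschetz fibration over a disk is determined by its monodromy factorization up to Hurwitz moves and global conjugation; see \cite{GS}. Proposition \ref{diffeo} already presents $_IX_b$ as a torus fibration $g\colon {_IX_b}\to\R^2$ whose unique singular fiber, over the origin, is the $b$-sphere necklace. First I would perturb $g$ by a $C^\infty$-small perturbation supported near the central fiber to an honest torus Lefschetz fibration $g'\colon {_IX_b}\to\R^2$ with exactly $b$ Lefschetz critical points: the $b$ nodes of the conic fibers of the auxiliary projection $(x,x',y)\mapsto y$ sit over the $b$-th roots of $-1$, and splitting the necklace into $b$ fishtail ($I_1$) fibers spreads these $b$ critical points over $b$ distinct points of a small disk. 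Restricting to a closed disk $\bar D$ that contains all critical values, and noting that the complement is a product collar (a $T^2$-bundle over a half-open annulus) so that $_IX_b$ is diffeomorphic to the interior of $(g')^{-1}(\bar D)$, we may work with a compact torus Lefschetz fibration.

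The crucial step is to identify the $b$ vanishing cycles of $g'$. From the conic-bundle structure of the $y$-projection, the vanishing cycle at each node is the circle in the fibre direction of the affine conic $xx'=y^b+1$ --- the ``$\arg x$-circle'' of Proposition \ref{diffeo} --- and I claim that, up to isotopy on the reference torus, this is one and the same primitive simple closed curve $\alpha$ for all $b$ nodes. Here the covering trick does the work: via $y\mapsto y^b$ the surface ${_IX_b}=\{xx'=y^b+1,\ y\ne 0\}$ is an \emph{unramified} $b$-fold cyclic cover of ${_IX_1}\cong\C^2\setminus\{xx'=1\}$, and the deck group $\Z/b$, acting by $y\mapsto\zeta y$, cyclically permutes the $b$ nodes and the $b$ components of the necklace; hence the $b$ local vanishing data are carried into one another by the boundary monodromy and therefore coincide. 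Thus the monodromy factorization of $g'$ is $\tau_\alpha^b$, with total monodromy $\left(\begin{smallmatrix}1&b\\0&1\end{smallmatrix}\right)$, which is exactly the monodromy of $h\colon{_IY_b}\to\Delta$ recorded above. On the other side, the Kodaira fiber of type $I_b$ is realized, after the same splitting into $b$ fishtail fibers, as a torus Lefschetz fibration over a disk with monodromy factorization $\tau_\alpha^b$; equivalently $_IY_b$ is the cyclic plumbing of $b$ spheres of self-intersection $-2$, i.e. $T^2\times D^2$ with $b$ $2$-handles attached along fibre-framed parallel copies of $\alpha$ (see \cite{GS,HKK}).

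Since two torus Lefschetz fibrations over a disk with the same monodromy factorization have diffeomorphic total spaces, we get $(g')^{-1}(\bar D)\cong h^{-1}(\bar\Delta')$, and passing to interiors yields the desired diffeomorphism $f\colon{_IX_b}\to{_IY_b}$. One can also bypass Kas' theorem by a pure covering argument: settle the case $b=1$ first, where ${_IX_1}\cong\C^2\setminus\{xx'=1\}$ has an explicit handlebody description identifying it with the total space of the $I_1$ germ ${_IY_1}$ ($T^2\times D^2$ with a single $(-1)$-framed $2$-handle; see \cite{HKK}), and then lift the diffeomorphism ${_IX_1}\cong{_IY_1}$ along the $b$-fold cyclic covers ${_IX_b}\to{_IX_1}$ and ${_IY_b}\to{_IY_1}$ attached to the same surjection $\pi_1\twoheadrightarrow\Z/b$. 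The main obstacle is the second paragraph: determining the vanishing cycles of the cluster-variety fibration \emph{on the nose} --- the boundary monodromy by itself only fixes the $T^2$-bundle over $S^1$ and not the way the central fibre is filled in --- and, in the same vein, checking that the non-holomorphic perturbation of $g$ does not alter the diffeomorphism type of $_IX_b$; the $\Z/b$-symmetry and the reduction to $b=1$ are precisely the tools that bring these points under control.
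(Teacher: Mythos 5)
Your argument is correct, and it actually contains two routes: the primary one (perturb $g$ into an honest genus-one Lefschetz fibration with $b$ fishtail fibers, identify all $b$ vanishing cycles with a single curve $\alpha$ via the $\Z/b$-symmetry, and invoke uniqueness of the total space given the monodromy factorization $\tau_\alpha^b$), and the "bypass" at the end, which is \emph{exactly} the paper's proof. The paper settles $b=1$ by the uniqueness, up to diffeomorphism, of a Lefschetz $T^2$-fibration over the disk with a single critical point (a $2$-handle attached to $T^2\times\Delta$ along the vanishing cycle with framing $-1$), and then obtains the general case by observing that ${_IX_b}\to{_IX_1}$ (via $y\mapsto y^b$) and ${_IY_b}\to{_IY_1}$ (via the plumbing description) are both the $b$-fold cyclic covers classified by $b\Z\subset\Z=\pi_1$, so the base diffeomorphism lifts. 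The covering route buys exactly what you identify as the main obstacle in your primary route: for $b>1$ it never has to pin down the $b$ individual vanishing cycles of the cluster-variety fibration, nor to justify that the non-holomorphic map $g$ can be perturbed near the necklace fiber into a genuine (positively oriented) Lefschetz fibration with $b$ distinct critical values --- a point you flag but do not fully discharge, and which is the only real gap in the monodromy-factorization version. Conversely, your primary route is more robust: it would apply to fibrations lacking the cyclic symmetry, and it makes the handle decomposition of ${_IX_b}$ explicit rather than inheriting it from ${_IX_1}$ through the cover.
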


\begin{proof}
We first prove the case when $b=1$. It follows from Proposition \ref{diffeo} that both $g:{_IX_1}\to \R^2$ and $h:{_IY_1}\to\Delta$ are Lefschetz $T^2$-fibrations with one critical point on the central fiber. Up to diffeomorphism, such fibration is unique. Indeed, it is obtained by attaching a $2$-handle to the trivial $T^2$-fibration $T^2\times \Delta\to\Delta$ along the vanishing cycle on any boundary fiber with framing $-1$. See \cite[Section 8.2]{GS} and \cite[Page 242]{KM} for detailed treatments. So there is a diffeomorphism $f:{_IX_b}\to {_IY_b}$. For general $b$, $_IX_b=\{xx'=y^b+1,y\ne0\}$ is a $b$-fold \'etale covering of $_IX_1$, given by $y\mapsto y^b$.  Note that the fundamental group $\pi_1(X_b)=\Z$, and the covering $X_b\to X_1$ corresponds to the subgroup $b\Z\subset\Z$. $_IY_b$ is obtained as plumbing together $b$ copies of cotangent disk bundles of 2-spheres according to the graph of a cycle with $b$ nodes. In particular, $_IY_1$ is obtained as the self-plumbing of one cotangent disk bundle of a 2-sphere. It follows from the construction that $_IY_b$ is a smooth $b$-fold covering of $Y_1$, which corresponds to the subgroup $b\Z\subset\Z=\pi_1(_IY_1)$. Now by the classification of covering spaces, there exists a diffeomorphism $f:{_IX_b}\to {_IY_b}$ as desired.
\end{proof}

\subsection{Type II} In this section, we construct a diffeomorphism $f:{_{II}X_b}\to {_{II}Y_b}$.
\begin{proposition}\label{diff2}
There exists a diffeomorphism $f:{_{II}X_b}\to{_{II}Y_b}$.
\end{proposition}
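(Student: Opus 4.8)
The plan is to build the diffeomorphism $f:{_{II}X_b}\to{_{II}Y_b}$ by decomposing both sides into the two type-I pieces that are glued along the cluster torus $X_0$, and then patching together the type-I diffeomorphism from \theoremref{diff} on each piece. Recall from \propositionref{hodge2}(1) that ${_{II}X_b}=U_b\cup V_b$ with $U_b\cong V_b\cong {_IX_b}$ and $U_b\cap V_b\cong X_0=\C^\ast\times\C^\ast$. On the elliptic side, ${_{II}Y_b}\to\Delta$ has two singular fibers of type $I_b$ over $p_1,p_2$; choosing a simple closed curve $\delta\subset\Delta$ separating $p_1$ from $p_2$, the preimages of the two closed disks cut out by $\delta$ give a decomposition ${_{II}Y_b}=P_1\cup P_2$ where each $P_i$ is an elliptic $I_b$-fibration over a disk (hence diffeomorphic to ${_IY_b}$ after smoothing corners) and $P_1\cap P_2=h^{-1}(\delta\times[-\epsilon,\epsilon])$ is a $T^2$-bundle over an annulus with monodromy $\left(\begin{smallmatrix}1&b\\0&1\end{smallmatrix}\right)\left(\begin{smallmatrix}1&0\\-b&1\end{smallmatrix}\right)$ (the product of the two local monodromies), restricted to a neighborhood of a boundary circle.

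First I would set up the analogous ``collar'' decomposition on the cluster side. Using the map $g$ of \propositionref{diffeo} adapted to ${_{II}X_b}$ — say $(x,y,x',y')\mapsto(|x|^2-|x'|^2,\log|y|)$ on $U_b$ and the symmetric formula on $V_b$ — one exhibits ${_{II}X_b}$ as a Lefschetz $T^2$-fibration over $\R^2$ with $b$ critical points over one node and $b$ over another, matching the combinatorics of ${_{II}Y_b}\to\Delta$. Then I would (i) apply \theoremref{diff} to get diffeomorphisms $f_i\colon U_b\xrightarrow{\ \sim\ }P_1$ and $V_b\xrightarrow{\ \sim\ }P_2$, (ii) observe that both $U_b\cap V_b$ and $P_1\cap P_2$ are $T^2$-bundles over $S^1\times(\text{interval})$ with the \emph{same} monodromy conjugacy class, and (iii) isotope $f_1,f_2$ near the gluing region so that they agree on the overlap, using that a self-diffeomorphism of a $T^2$-bundle over $S^1$ covering the identity on the base is isotopic to one determined by its action on homology, which here is controlled by the monodromy. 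Gluing the adjusted maps produces $f$.

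The main obstacle is step (iii): ensuring the two type-I diffeomorphisms can be matched on the overlap. The subtlety is that \theoremref{diff} only asserts \emph{existence} of $f_i$, with no control over its behavior near $\partial U_b$; one must show that any two diffeomorphisms $T^2\times S^1\to T^2\times S^1$ (or the appropriate mapping torus) which induce the same map on $H_\ast$ are isotopic, so that after composing $f_2$ with a self-diffeomorphism of $P_2$ supported near the boundary we can force agreement with $f_1$ on the collar. This is where the 4-manifold input — uniqueness of the relevant handle attachments as in \cite{GS,KM}, together with the fact that $\pi_0\mathrm{Diff}(T^2)=\mathrm{SL}(2,\Z)$ acts faithfully on homology — is needed. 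A cleaner alternative, which I would pursue if the direct gluing is delicate, is to exhibit both ${_{II}X_b}$ and ${_{II}Y_b}$ directly as Lefschetz $T^2$-fibrations over a disk with $2b$ critical points arranged with the prescribed vanishing cycles and framings, and invoke the classification of such Lefschetz fibrations (the total space is determined up to diffeomorphism by the ordered tuple of vanishing cycles up to the usual Hurwitz and global conjugation moves): the two handlebody presentations coincide because the vanishing-cycle data on both sides is exactly ``$b$ copies of a cycle $a$, then $b$ copies of a cycle $b$ meeting $a$ once,'' matching the monodromy data recorded in \propositionref{perverse2} and in the cluster picture.
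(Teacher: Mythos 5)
The paper's proof is essentially your ``cleaner alternative,'' not your primary route. It covers ${_{II}X_b}$ by $X^\circ=\{x\ne0,y\ne0\}$ together with small neighborhoods $U_\epsilon$, $V_\eta$ of the two sets of $b$ singular points, shows that gluing $U_\epsilon$ (resp.\ $V_\eta$) onto the trivial $T^2$-fibration $X^\circ\to\R^2$ is exactly attaching $b$ $2$-handles along the $b$ circles $v=(2c+1)\pi/b$ (resp.\ $u=(2c+1)\pi/b$) in a boundary fiber, and pins down the framings as $-1$ by observing that $X^\circ\cup U_\epsilon={_IX_b}$, whose handle description is already known from \theoremref{diff}. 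Since this is precisely the vanishing-cycle/handle description of ${_{II}Y_b}$ (two $I_b$ fibers whose vanishing cycles form a basis of $H_1(T^2;\Z)$), the diffeomorphism follows. So your second sketch is on the paper's track and is the one you should commit to.

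Your primary route (glue two type-I diffeomorphisms along the overlap) has a genuine gap at step (iii), as you acknowledge, and I would flag two concrete problems. First, the decompositions on the two sides are not parallel as stated: $U_b\cap V_b\cong X_0\cong T^2\times\R^2$ is a $T^2$-bundle over a contractible base, whereas a simple closed curve separating $p_1$ from $p_2$ cuts $\Delta$ into a disk and an annulus (not two disks), and the overlap region $h^{-1}(\delta\times[-\epsilon,\epsilon])$ is a $T^2$-bundle over an annulus; these overlaps are not homotopy equivalent, so the pieces cannot be matched as described without reworking the decomposition. Second, even with a corrected decomposition, \theoremref{diff} gives no control over the boundary behavior of $f_1,f_2$, and the diffeomorphism type of a union $A\cup_\phi B$ depends on the isotopy class of $\phi$; one would need the further (nontrivial) statement that the discrepancy $\phi\circ\psi^{-1}$ extends over one of the pieces, which the appeal to $\pi_0\mathrm{Diff}(T^2)$ alone does not supply. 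The handle-attachment argument sidesteps all of this, which is why the paper takes that route.
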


\begin{proof}
By Proposition \ref{hodge2}.(1), the cluster variety $_{II}X_b\in\C^4$ is obtained by gluing $\{xx'=y^b+1,y\ne0\}$ and $\{yy'=x^b+1,x\ne0\}$ along the locus $\{x\ne0,y\ne0\}$. We cover $_{II}X_b$ by three open sets:
\begin{align*}
&X^\circ={_{II}X_b}\cap\{x\ne0,y\ne0\},\\
&U_\epsilon={_{II}X_b}\cap\{|x|<\epsilon,|y^b+1|<\epsilon\},\\
&V_\eta={_{II}X_b}\cap\{|x^b+1|<\eta, |y|<\eta\}.
\end{align*} 
Then for small $\epsilon$ and $\eta$, $U_\epsilon$ and $V_\eta$ are both diffeomorphic to $b$ disjoint copies of $\R^4$, and $U_\epsilon\cap V_\eta=\phi$. Let $\pi:X^\circ\rightarrow \R^2$ be the trivial elliptic fibration defined by $(x,y)\mapsto (\log|x|,\log|y|)$. The intersection $U_\epsilon^\circ=X^\circ\cap U_\epsilon$ can be described in terms of the fibration $\pi$ as  
\[
\pi(U_\epsilon^\circ)=\left\{(s,t)\in\R^2\mid s<\log\epsilon,\frac{\log(1-\epsilon)}{b}<t<\frac{\log(1+\epsilon)}{b}\right\},
\]
and for such $(s,t)$, $U_\epsilon^\circ\cap\pi^{-1}(s,t)$ is consists of $b$ copies of $S^1\times \R^1$. More precisely, let $u=\arg x,~~~v=\arg y$ be a parametrization of the fiber $\pi^{-1}(s,t)$, then 
\[
U_\epsilon\cap\pi^{-1}(s,t)=\left\{(u,v)\in T^2\mid \frac{(2c+1)\pi-\theta_{s,t,\epsilon}}{b}<v<\frac{(2c+1)\pi+\theta_{s,t,\epsilon}}{b},0\le c<b\right\}
\]
where $\theta_{s,t,\epsilon}=\arccos~~~(1+e^{2t}-\epsilon^2)/2e^t$. Note that if we let $\epsilon\to 0$, then $s\to-\infty$, $t\to0$, and $\theta_{s,t,\epsilon}\to0$. In the language of attaching handle, gluing $U_\epsilon$ onto $X^\circ$ is exactly attaching $b$ 2-handles along the $b$ circles 
\[
\left\{(u,v)\in T^2|v=\frac{(2c+1)\pi}{b},0\le c\le b-1\right\},
\]
on the boundary fiber over $(-\infty,0)$. Note that $X^\circ\cup U_\epsilon={_IX_b}$. So the framing numbers at the $b$ attaching circles have to be $-1$. We apply the same argument to attach $V_\eta$ to $X^\circ$, which is equivalent to attach $b$ 2-handles along the $b$ circles
\[
\left\{(u,v)\in T^2\mid u=\frac{(2c+1)\pi}{b},0\le c\le b-1\right\}
\]
on the boundary fiber over $(0,-\infty)$ with framing numbers -1. Therefore, $_{II}X_b$ is diffeomorphic to $_{II}Y_b$, the total space of an elliptic fibration over open disk with two $I_b$ singular fibers, whose vanishing cycles form a basis of $H_1(T^2,\Z)$, where $T^2$ is a general fiber.
\end{proof}

\section{The $P=W$ identities}
\setcounter{subsection}{-1}
\subsection{Type 0} Let $h:Y_0\to \Delta$ be the natural projection $\textrm{pr}_2:E\times\Delta\to \Delta$, where $E$ is any elliptic curve. Then $X_0=\C^*\times\C^*$ and $Y_0$ are both diffeomorphic to $\R^1\times\R^1\times S^1\times S^1$, hence are diffeomorphic. The $P=W$ identity follows from the study of $\tilde{A}_0$ case in \cite[Section 5]{Z}. 
\begin{proposition}\label{thm0}
For any diffeomorphism $f:X_0\to Y_0$, the $P=W$ identity (\ref{0001}) holds.
\end{proposition}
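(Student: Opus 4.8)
The plan is to reduce the statement for $X_0 = \C^\ast \times \C^\ast$ and $Y_0 = E \times \Delta$ to explicit computations of both filtrations, and then observe that the $P=W$ matching is forced on each graded piece regardless of which diffeomorphism $f$ is used. Both sides have the homotopy type of $T^2$, so $H^0$, $H^1$, $H^2$ are each either $1$- or $2$-dimensional, and in this low-dimensional setting the two filtrations are essentially determined by where they sit.

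First I would record the weight filtration on $H^\ast(X_0)$. Since $\C^\ast \times \C^\ast$ is a torus $\mathbb{G}_m^2$, its mixed Hodge structure is well known: $H^k$ is pure of weight $2k$ (it is generated by wedge products of the weight-$2$ classes $(2\pi i)^{-1} d\log z_1$, $(2\pi i)^{-1} d\log z_2$). So $W_\bullet$ is concentrated in a single weight in each degree: $\Gr^W_0 H^0 = \Q$, $\Gr^W_2 H^1 = \Q^2$, $\Gr^W_4 H^2 = \Q$, all other graded pieces zero. In terms of the halved filtration, $W_{2k}H^\ast(X_0)$ picks up exactly $H^0$ for $k=0$, adds $H^1$ for $k=1$, and adds $H^2$ for $k=2$.

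Next I would compute the perverse filtration for $h = \mathrm{pr}_2 : E \times \Delta \to \Delta$. Since $h$ is smooth and proper with connected fibers $E$, the decomposition theorem gives $Rh_\ast \Q[1] \cong \Q_\Delta[1] \oplus R^1 h_\ast \Q[-1] \oplus \Q_\Delta[1][-2]$ (no skyscraper summands, $\Delta$ being contractible), and hence $\mathbb{H}^\ast(\Delta, Rh_\ast\Q) = H^\ast(E)$ with the perverse grading by the shift: $\Gr^P_0 H^0 = \Q$, $\Gr^P_1 H^1 = \Q^2$, $\Gr^P_2 H^2 = \Q$. Thus $P_0 H^\ast(Y_0)$ is $H^0$, $P_1 H^\ast(Y_0)$ adds $H^1$, $P_2 H^\ast(Y_0)$ adds $H^2$ — exactly matching the weight profile above under $k \leftrightarrow$ degree. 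The point is that in each cohomological degree $d$, both $P_d H^d$ and $W_{2d}H^d$ equal the whole group $H^d$, while $P_{d-1}H^d = W_{2d-1}H^d = 0$; so for any isomorphism $f^\ast : H^\ast(Y_0;\Q) \to H^\ast(X_0;\Q)$ (in particular for a diffeomorphism pullback, which is degree-preserving) we automatically get $P_k H^\ast(Y_0) = W_{2k}H^\ast(X_0) = W_{2k+1}H^\ast(X_0)$ for all $k$.

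The only subtlety — and the reason the paper flags "not obvious a priori" — is the absence of a canonical diffeomorphism: one must check that the matching is insensitive to the choice of $f$. Here that is immediate because both filtrations are, degree by degree, the trivial two-step filtration $0 \subsetneq H^d$ with the jump at $k = d$; no genuine comparison of subspaces is needed. I would also note, following the reference to the $\tilde A_0$ case in \cite[Section 5]{Z}, that the same bookkeeping has already appeared there, so one can simply cite it. I do not anticipate a real obstacle in this case; the genuine work (where the choice of $f$ could in principle matter, because $H^2$ has a nontrivial two-step filtration) is postponed to the Type I and Type II propositions.
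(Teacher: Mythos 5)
Your proof is correct and follows the same reasoning the paper relies on: in each degree $d$ both filtrations on $H^d$ are the trivial one-step filtration jumping at $k=d$ (weight $2d$ purity for $\mathbb{G}_m^2$ versus the perverse grading of the smooth proper fibration $E\times\Delta\to\Delta$), so any degree-preserving isomorphism matches them. The paper simply delegates this bookkeeping to the $\tilde{A}_0$ case of \cite[Section 5]{Z}; you have filled in exactly those details.
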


\subsection{Type I} In this section, we show that for the cluster variety $_IX_b$ (Section 2.1) and the elliptic fibration $h:{_IY_b}\to\Delta$ (Section 3.2), any diffeomorphism between them induces a $P=W$ identity.
\begin{proposition}\label{w}
Let $Z$ be the locus cut out by $|x|=|x'|$ and $|y|=1$ in $_IX_b$. Then $W_2H^2({_IX_b})$ is the kernel of the linear functional
\[
\int_Z:H^2({_IX_b})\to \Q.
\]
\end{proposition}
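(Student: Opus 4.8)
The plan is to exploit the explicit description of $H^2({}_IX_b)$ from \propositionref{hodge} together with the integral formulas of \propositionref{toy lemma}. Recall that $H^2({}_IX_b;\C)$ is spanned by $\gamma, y\gamma,\dots,y^{b-1}\gamma$, that $\gamma$ generates the weight-$4$ piece $\Gr_4^WH^2$, and that the $\Q$-span of $y\gamma,\dots,y^{b-1}\gamma$ is exactly $W_2H^2({}_IX_b;\Q)$. Since $Z = S_1 \cup \cdots \cup S_b$ as a cycle, its class in $H_2$ is $S_1+\cdots+S_b$, so the functional $\int_Z$ on a $2$-form $\omega$ is just $\sum_{j=1}^b \int_{S_j}\omega$. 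Both $W_2H^2$ and $\ker\int_Z$ are $\Q$-subspaces of the $b$-dimensional space $H^2({}_IX_b;\Q)$, and $W_2H^2$ has dimension $b-1$ by the table in \propositionref{hodge}; since $\int_Z$ is not identically zero (see below), $\ker\int_Z$ also has dimension $b-1$. Hence it suffices to prove the inclusion $W_2H^2({}_IX_b) \subseteq \ker\int_Z$, i.e.\ that $\int_Z y^c\gamma = 0$ for $c=1,\dots,b-1$.

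The key computation is therefore: for $1 \le c \le b-1$,
\[
\int_Z y^c\gamma \;=\; \sum_{j=1}^b \int_{S_j} y^c\gamma \;=\; \sum_{j=1}^b \frac{1}{2\pi i c}\bigl(e^{(2j+1)c\pi i/b} - e^{(2j-1)c\pi i/b}\bigr),
\]
using \propositionref{toy lemma}. The inner sum telescopes: the $j$-th term contributes $e^{(2j+1)c\pi i/b}$ with a plus sign and $e^{(2j-1)c\pi i/b}$ with a minus sign, so consecutive terms cancel and the whole sum collapses to $\frac{1}{2\pi i c}\bigl(e^{(2b+1)c\pi i/b} - e^{c\pi i/b}\bigr)$. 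Since $e^{(2b+1)c\pi i/b} = e^{2c\pi i}\,e^{c\pi i/b} = e^{c\pi i/b}$, this vanishes. Thus each generator $y^c\gamma$ of $W_2H^2$ lies in $\ker\int_Z$, giving $W_2H^2 \subseteq \ker\int_Z$, and equality follows by the dimension count. To see that $\int_Z$ is not the zero functional (so that $\ker\int_Z$ really has dimension $b-1$ rather than $b$), note $\int_Z \gamma = \sum_{j=1}^b \int_{S_j}\gamma = \sum_{j=1}^b \frac{1}{b} = 1 \neq 0$; equivalently, $\gamma$ spans the weight-$4$ quotient and is not killed by $\int_Z$.

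I do not anticipate a serious obstacle: the statement is essentially a repackaging of \propositionref{hodge} and \propositionref{toy lemma}, and the only real content is the telescoping sum, which is a one-line trigonometric identity. The one point that needs a word of care is the claim that $[Z] = \sum_j [S_j]$ in $H_2({}_IX_b;\Q)$ and hence that $\int_Z$ is well defined as the stated linear functional on $H^2$ — this is immediate from \propositionref{hodge}(2)--(3), which identifies $Z$ with the deformation-retract necklace whose components $S_1,\dots,S_b$ generate $H_2$. With that in hand the proof is just the dimension comparison plus the vanishing of the telescoping sum.
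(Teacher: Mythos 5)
Your proof is correct and follows essentially the same route as the paper: both reduce to the integral formulas of Proposition \ref{toy lemma}, observe that the sum over $S_1,\dots,S_b$ telescopes to zero for $y^c\gamma$ ($1\le c\le b-1$) while $\int_Z\gamma=1$, and then descend from the $\C$-span to the $\Q$-statement (the paper via an explicit lemma on $\Q$-subspaces, you via an equivalent dimension count).
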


\begin{proof} 
By Proposition \ref{hodge}.(5), $W_2H^2({_IX_b})\otimes\C$ is generated by $\C$-basis $y\gamma,\cdots, y^{b-1}\gamma$.  
By Proposition \ref{toy lemma}, we have
\[
\int_Z y^c\gamma=\sum_{j=1}^b\int_{S_j}y^c\gamma=\sum_{j=1}^b\frac{1}{2\pi ic}(e^{(2j+1)c\pi i/b}-e^{(2j-1)c\pi i/b})=0
\]
for $c=1,\cdots, b-1$ and
\[
\int_Z \gamma=\sum_{j=1}^b\int_{S_j}\gamma=1.
\]
We conclude that $W_2H^2({_IX_b})\otimes\C$ is exactly the kernel of the linear functional 
\[
\int_Z:H^2(X;\C)\to\C.
\]
Since $Z$ is a rational homology class, the kernel of $\int_Z$ admits rational basis, and hence the conclusion follows from the following lemma.
\end{proof}

\begin{lemma}
Let $V$ be a $\Q$-vector space and let $W_1,W_2$ be two subspaces of $V$. Suppose that $W_1\otimes\C$ and $W_2\otimes\C$ are identical as subspaces of $V\otimes\C$, then $W_1=W_2$ as subspaces of $V$. 
\end{lemma}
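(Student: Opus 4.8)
The plan is to prove the elementary linear-algebra fact that a pair of rational subspaces whose complexifications coincide must themselves coincide. First I would reduce to a single containment: by symmetry it suffices to show $W_1 \subseteq W_2$, since the hypothesis is symmetric in $W_1$ and $W_2$. So fix $v \in W_1$; I want to conclude $v \in W_2$.

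The key step is to exploit that $\C$ is a faithfully flat (indeed free) module over $\Q$, or more elementarily, that a $\Q$-basis of $V$ remains a $\C$-basis of $V \otimes_\Q \C$. Concretely, choose a $\Q$-basis $e_1,\dots,e_n$ of $W_2$ and extend it to a $\Q$-basis $e_1,\dots,e_n,e_{n+1},\dots,e_m$ of $V$. Then $e_1,\dots,e_m$ is a $\C$-basis of $V \otimes \C$, and $W_2 \otimes \C$ is precisely the $\C$-span of $e_1,\dots,e_n$; this is because tensoring the split inclusion $W_2 \hookrightarrow V$ with $\C$ stays split with the complementary summand spanned by $e_{n+1},\dots,e_m$. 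Now write $v = \sum_{i=1}^m c_i e_i$ with $c_i \in \Q$. Since $v \in W_1 \subseteq W_1 \otimes \C = W_2 \otimes \C = \C\langle e_1,\dots,e_n\rangle$, uniqueness of coordinates in the $\C$-basis $e_1,\dots,e_m$ forces $c_i = 0$ for $i > n$. Hence $v = \sum_{i=1}^n c_i e_i \in W_2$, as desired. Applying the same argument with the roles reversed gives $W_2 \subseteq W_1$, so $W_1 = W_2$.

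I do not anticipate a genuine obstacle here: the only thing to be careful about is the identification of $W_2 \otimes_\Q \C$ with the $\C$-subspace it generates inside $V \otimes_\Q \C$, i.e. that the natural map $W_2 \otimes_\Q \C \to V \otimes_\Q \C$ is injective with image the $\C$-span of any $\Q$-basis of $W_2$. This follows from exactness of $- \otimes_\Q \C$ over the field $\Q$ (every $\Q$-vector space is flat, and in fact the sequence $0 \to W_2 \to V \to V/W_2 \to 0$ of $\Q$-vector spaces is split), so no subtlety arises. An alternative, even more hands-on route avoiding tensor-product language altogether: a vector $v \in V$ lies in $W_2$ if and only if it is annihilated by every $\Q$-linear functional on $V$ vanishing on $W_2$; each such functional extends $\C$-linearly to one vanishing on $W_2 \otimes \C = W_1 \otimes \C \ni v$, so $v \in W_2$. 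Either presentation closes the argument in a few lines.
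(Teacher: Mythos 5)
Your proof is correct. Note that the paper itself offers no proof of this lemma --- it is stated without argument as a standard fact of linear algebra --- so there is no authorial proof to compare against; your basis-extension argument (and the equivalent dual formulation via $\Q$-linear functionals) is a complete and standard verification, valid in the finite-dimensional setting used in the paper and, with bases supplied by Zorn's lemma, in general.
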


\begin{proposition} \label{p}
Let $h:{_IY_b}\to \Delta$ be the elliptic fibration of type $I_b$. Let $F=h^{-1}(0)$ be the singular central fiber, viewed as a homology class. Then $P_1H^2({_IY_b})$ is the kernel of the linear functional 
\[
\int_F:H^2({_IY_b})\to \Q.
\]
\end{proposition}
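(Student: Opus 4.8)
The plan is to mimic the proof of \propositionref{w} on the $P$-side, using \propositionref{perverse} to identify $P_1H^2({_IY_b})$ explicitly and then showing it coincides with $\ker\int_F$. First I would recall from \propositionref{perverse} that $P_1H^2({_IY_b})$ is spanned by the fundamental classes of the irreducible components $C_1,\dots,C_b$ of the central fiber $F=h^{-1}(0)$, subject to the single relation $[C_1]+\dots+[C_b]=[F]$ being cohomologous to $0$; hence $\dim P_1H^2=b-1$, which matches the perverse numbers table. Equivalently, by \eqref{0101}, $P_1H^2({_IY_b})=\im\{H^2_c({_IY_b})\to H^2({_IY_b})\}$, and I would use whichever description is more convenient.

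Next I would show $P_1H^2({_IY_b})\subseteq\ker\int_F$. Since $P_1$ is spanned by the $[C_i]$, it suffices to check $\int_F[C_i]=C_i\cdot F=0$ for each $i$. But $F$ is a fiber of $h$, so it is algebraically equivalent to a nearby smooth fiber $F'$ disjoint from $F$, giving $C_i\cdot F=C_i\cdot F'=0$ since $C_i\subseteq F$. (Alternatively, one invokes the standard fact that the intersection form restricted to the components of a fiber of an elliptic fibration is negative semidefinite with radical spanned by $[F]$, so in particular $[F]$ pairs to zero with every $[C_i]$.) Either way $P_1H^2\subseteq\ker\int_F$.

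For the reverse inclusion I would compare dimensions: $H^2({_IY_b})$ has dimension $b$ by the perverse numbers table (total of $\Gr^P$ is $b-1+1=b$), and the functional $\int_F$ is nonzero on $H^2({_IY_b})$ — for instance it is nonzero on the fundamental class of a section or a multisection of $h$, or more simply on any class with nonzero pairing against $[F]$, which exists because $[F]$ is a nonzero element of $H_2$ (it is the primitive generator detected by $\Gr_2^P$); hence $\ker\int_F$ has dimension $b-1=\dim P_1H^2$, forcing equality. The one point that needs care — and the main obstacle — is verifying that $\int_F$ is genuinely nonzero on $H^2({_IY_b})$, i.e. that $[F]\neq 0$ in $H_2({_IY_b};\Q)$; I would address this by noting that under the decomposition \eqref{0100} the class $[F]$ generates the top perverse piece $\Gr_2^PH^2({_IY_b})$ (coming from the summand $(\Q_\Delta[1])[-2]$), which is one-dimensional and nonzero, so $[F]$ is not perverse-degree $\le 1$ and in particular is nonzero in homology, and the perfect pairing between $H^2$ and (locally finite) $H_2$ then shows $\int_F$ is a nonzero functional. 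This completes the identification $P_1H^2({_IY_b})=\ker\int_F$.
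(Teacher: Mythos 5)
Your overall strategy is the same as the paper's: show the component classes $[C_i]$ lie in $\ker\int_F$ by intersection theory (the paper computes $C_i\cdot F=-2+1+1=0$ explicitly for $b>1$; your "move $F$ to a disjoint nearby fiber $F'$" argument is an acceptable equivalent, since all components of an $I_b$ fiber have multiplicity one so $[F]=[F']$ in $H_2$), and then conclude by a dimension count using the non-degeneracy of the pairing $H_2\times H^2\to\Q$ together with $\dim H^2({_IY_b})=b$. The paper also treats $b=1$ separately via a section class, exactly as in your first alternative.

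The one step you flag as delicate --- that $\int_F\not\equiv 0$, i.e.\ $[F]\neq 0$ in $H_2({_IY_b};\Q)$ --- is the step you justify incorrectly. You assert that "$[F]$ generates the top perverse piece $\Gr_2^PH^2({_IY_b})$" and is therefore "not of perverse degree $\le 1$". This confuses the homology class of $F$ with its cohomology class: the class of $F$ in $H^2({_IY_b})$ is the sum of the fundamental classes of the components, which is \emph{zero} (this is precisely why $\dim P_1H^2=b-1$ rather than $b$ in Proposition \ref{perverse}), so it certainly does not generate $\Gr_2^PH^2$, and "perverse degree" is not defined for the homology class. The needed non-vanishing is true for a much simpler reason: ${_IY_b}$ deformation retracts onto $F$, so $H_2({_IY_b};\Q)\cong H_2(F;\Q)\cong\Q^b$ with the $[C_i]$ as a basis, whence $[F]=\sum_i[C_i]\neq 0$. (Alternatively: $\int_F=\int_{F_t}$ for a smooth fiber $F_t$, and the restriction $H^2({_IY_b})\to H^2(F_t)\cong\Q$ is surjective, being the projection onto $\Gr_2^P$.) Also, the relevant perfect pairing is the Kronecker pairing of $H^2$ with ordinary singular $H_2$ (where the compact cycle $F$ lives), not with locally finite homology. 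With that correction your argument is complete and matches the paper's.
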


\begin{proof}
When $b=1$, $P_1H^2({_IY_1})=0$. $H^2(Y)$ is spanned by a section class $\sigma$ with $\int_F \sigma=1$. Hence the result follows. When $b>1$, the self-intersections of all irreducible components of $F$ are $-2$ and the intersection numbers between adjacent components are $1$. It follows that all exceptional curves in central fiber $F$ are in the kernel of the pairing. By Proposition \ref{perverse}, $P_1H^2({_IY_b})$ is exactly the $(b-1)$-dimensional vector space spanned by the classes of exceptional divisors. Since the pairing $H_2({_IY_b})\times H^2({_IY_b})\to \Q$ is non-degenerate, the kernel of $\int_F$ is at most of dimension $b-1$. Therefore the kernel of $\int_F$ is exactly of dimension $b-1$ and coincides with $P_1H^2({_IY_b})$.
\end{proof}

\begin{theorem}\label{thm}
For any diffeomorphism $f:{_IX_b}\to {_IY_b}$, the $P=W$ identity (\ref{0001}) holds.
\end{theorem}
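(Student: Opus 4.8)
The plan is to reduce the $P=W$ identity to a single numerical matching: that a diffeomorphism necessarily identifies the fundamental class of $Z \subset {}_IX_b$ (up to sign) with the fundamental class of the central fiber $F \subset {}_IY_b$, and then to invoke \propositionref{w} and \propositionref{p}. First I would dispose of degree $0$ and degree $1$. In degree $0$ everything is trivial: $P_0H^0 = W_0H^0 = H^0$ and higher pieces vanish on both sides by the tables in \propositionref{hodge} and \propositionref{perverse}. In degree $1$, $H^1({}_IX_b) = \Q$ sits in $W_2$ by \propositionref{hodge}.(5), and $H^1({}_IY_b) = \Q$ sits in $P_1$ by \propositionref{perverse}; both are all of $H^1$ concentrated in the single relevant step, so $P_1H^1 = W_2H^1 = W_3H^1 = H^1$ and $f^*$ trivially respects this. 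So the entire content is in degree $2$.

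In degree $2$, by \propositionref{w} and \propositionref{p} we must show $f^*(P_1H^2({}_IY_b)) = W_2H^2({}_IX_b)$, i.e.\ that $f^*$ carries $\ker(\int_F)$ onto $\ker(\int_Z)$. Since $f$ is a diffeomorphism, $f^*$ is an isomorphism on $H^2$ compatible with the pairing against $H_2$ via $f_*$: for $\alpha \in H^2({}_IY_b)$ and $c \in H_2({}_IX_b)$ we have $\int_c f^*\alpha = \int_{f_*c}\alpha$. Hence $f^*(\ker \int_F) = \ker \int_{f_*^{-1}F}$, where $f_*^{-1}F = (f^{-1})_*F \in H_2({}_IX_b;\Z)$. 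So it suffices to show that $(f^{-1})_*[F] = \pm[Z]$ in $H_2({}_IX_b;\Z) \cong \Q^b$ (or at least that it is a nonzero rational multiple of $[Z]$, which already forces the kernels to coincide). The key observation is that $[F]$ and $[Z]$ are distinguished homologically: $[Z] = S_1 + \cdots + S_b$ is the sum of the necklace spheres in ${}_IX_b$, $[F]$ is the sum of the necklace spheres (= irreducible components) in ${}_IY_b$, and — crucially — both are characterized intrinsically as (a generator of) the radical of the intersection form on $H_2$. Indeed, on ${}_IY_b$ the intersection matrix of the $b$ necklace components is the affine $\widetilde{A}_{b-1}$ Cartan matrix (diagonal $-2$, adjacent $1$, cyclically), whose kernel is spanned by $(1,\ldots,1)$, i.e.\ $[F]$; and on ${}_IX_b$ the classes $[S_j]$ satisfy the same incidence relations, so the radical of the intersection form on $H_2({}_IX_b;\Q)$ is spanned by $[Z]$. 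A diffeomorphism preserves the intersection form, hence preserves its radical, so $(f^{-1})_*[F]$ must be a nonzero rational multiple of $[Z]$ (nonzero because $f_*$ is an isomorphism and $[F]\neq 0$; note $[F]$ itself is a nonzero class in $H_2({}_IY_b;\Q)$ since $b_2 = b$ while there are only $b$ components with one relation — wait, one must check $[F]\ne 0$, which holds because $H_2({}_IY_b;\Q) \cong \Q^b$ has rank equal to the number of components, so the components are independent and their sum is nonzero; this is where I must be a little careful, since in ${}_IX_b$ the sum $S_1+\cdots+S_b = [Z]$ is likewise nonzero as $H_2 = \Q^b$ with the $S_j$ a basis). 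Then $f^*(\ker\int_F) = \ker\int_{(f^{-1})_*F} = \ker\int_{[Z]} = W_2H^2({}_IX_b)$, which is exactly \eqref{0001} for $k=1$; and for $k=0$, $P_0H^2 = W_0H^2 = 0$ on both sides, while for $k\ge 2$ both filtration steps are all of $H^2$.

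The main obstacle is the identification of $[F]$ and $[Z]$ as spanning the radicals of the respective intersection forms, and in particular verifying that the self-intersection $[Z]^2 = 0$ in ${}_IX_b$ and that $[Z]$ is orthogonal to all of $H_2({}_IX_b;\Q)$. For ${}_IY_b$ this is standard (the central fiber of an elliptic fibration has self-intersection $0$ and is orthogonal to everything, being a fiber class). For ${}_IX_b$ one should argue directly from the geometry of \propositionref{diffeo}: $Z$ is the fiber $g^{-1}(0,0)$ of the Lefschetz fibration $g:{}_IX_b\to\R^2$, and a regular fiber $g^{-1}(s,t)$ for $(s,t)\ne(0,0)$ is a torus disjoint from $Z$ and homologous to $[Z]$ (both being fibers of $g$ over a connected base, since $\R^2\setminus\{0\}$ meets every other fiber-class computation — more carefully, over any path in $\R^2$ the fibers are homologous, and the path may be taken to avoid $0$ after leaving it, so $[Z]$ equals the class of a nearby torus fiber); hence $[Z]^2 = [Z]\cdot[\text{torus fiber}] = 0$ and, since every class in $H_2$ can be represented avoiding a chosen torus fiber's neighborhood, $[Z]$ is orthogonal to $H_2$. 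Combined with $\dim \ker(\text{intersection form}) = 1$ on each side (the form has rank $b-1$ on a $b$-dimensional space, as the $\widetilde{A}_{b-1}$ matrix has rank $b-1$), this pins down the radical and completes the argument. A small additional check, needed to match signs conventions but not logically essential, is that $\int_Z$ and $\int_F$ are genuinely nonzero functionals — which is \propositionref{toy lemma} ($\int_Z\gamma = 1$) and the section-class computation in \propositionref{p} respectively — so that $W_2H^2$ and $P_1H^2$ are honest hyperplanes rather than all of $H^2$.
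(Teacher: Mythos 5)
Your proof is correct and follows essentially the same route as the paper: reduce everything to degree $2$, characterize $W_2H^2({}_IX_b)$ and $P_1H^2({}_IY_b)$ as the kernels of $\int_Z$ and $\int_F$ via \propositionref{w} and \propositionref{p}, and show that any diffeomorphism carries $[Z]$ to a nonzero multiple of $[F]$. The one point of divergence is how that last matching is justified: the paper asserts $f_*[Z]=\pm[F]$ on the grounds that $Z$ and $F$ are deformation retracts, whereas you characterize both classes intrinsically as generators of the one-dimensional radical of the intersection form on $H_2$, which is manifestly preserved by any diffeomorphism; this is a more robust justification of the same step, and you correctly observe that proportionality (rather than equality up to sign) already suffices since it does not change the kernel of the integration functional.
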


\begin{proof}
By comparing the mixed Hodge numbers in Proposition \ref{hodge} and perverse numbers in Proposition \ref{perverse}, it suffices to identify $W_2H^2({_IX_b})$ and $P_1H^2({_IY_b})$ via the diffeomorphism. Since $Z$ and $F$ are deformation retracts of $_IX_b$ and $_IY_b$ respectively, the homology class $Z$ in $H_2({_IX_b},\Q)$ is sent to $F$ or $-F$ in $H_2({_IY_b};\Q)$ via any diffeomorphism. Now by the Proposition \ref{w} and Proposition \ref{p}, $P_1H^2({_IY_b})=W_2H^2({_IX_b})$ and hence the $P=W$ identity holds.
\end{proof}

\begin{corollary}\label{5.6}
\[
W_2H^2({_IX_b})=\im \{H^2_c({_IX_b})\to H^2({_IX_b})\}.
\]
\end{corollary}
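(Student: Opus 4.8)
The plan is to transport the identity (\ref{0101}) from $_IY_b$ to $_IX_b$ along the diffeomorphism $f$. By \propositionref{perverse}, we have $P_1H^2(_IY_b)=\im\{H^2_c(_IY_b)\to H^2(_IY_b)\}$. Since $f:{_IX_b}\to{_IY_b}$ is a diffeomorphism, it induces isomorphisms on both ordinary and compactly supported cohomology that are compatible with the natural map $H^2_c\to H^2$; hence $f^*$ identifies $\im\{H^2_c(_IY_b)\to H^2(_IY_b)\}$ with $\im\{H^2_c(_IX_b)\to H^2(_IX_b)\}$. On the other hand, \theoremref{thm} tells us that $f^*$ sends $P_1H^2(_IY_b)$ to $W_2H^2(_IX_b)$. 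Combining these two facts gives $W_2H^2(_IX_b)=\im\{H^2_c(_IX_b)\to H^2(_IX_b)\}$, which is the claim.

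Concretely, the key steps in order are: first, invoke \propositionref{perverse} for the equality $P_1H^2(_IY_b)=\im\{H^2_c(_IY_b)\to H^2(_IY_b)\}$; second, observe that a diffeomorphism is in particular a proper homotopy equivalence, so the square
\[
\begin{array}{ccc}
H^2_c(_IY_b) & \to & H^2(_IY_b)\\
\downarrow f^* & & \downarrow f^*\\
H^2_c(_IX_b) & \to & H^2(_IX_b)
\end{array}
\]
commutes with both vertical arrows isomorphisms, so images correspond; third, apply \theoremref{thm} to identify $f^*P_1H^2(_IY_b)=W_2H^2(_IX_b)$; fourth, chase the equalities. Since all the substantive work has already been done in \theoremref{thm} and \propositionref{perverse}, this is a short deduction rather than a new argument.

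The only point requiring a word of care is the compatibility of $f^*$ with the forget-supports map $H^2_c\to H^2$: this holds because pullback along any continuous proper map is a morphism of the long exact sequence relating compactly supported and ordinary cohomology (or, equivalently, because $f$ extends to a homeomorphism of one-point compactifications and $f^*$ on $H^2_c$ is the restriction of $\tilde f^*$ on reduced cohomology of the compactification). For a diffeomorphism this is automatic, so I do not anticipate any genuine obstacle here; the statement is essentially a corollary in the literal sense.
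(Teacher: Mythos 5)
Your proof is correct and is exactly the argument the paper intends: the paper's proof of Corollary~\ref{5.6} simply cites Proposition~\ref{perverse} (for the identity $P_1H^2({_IY_b})=\im\{H^2_c({_IY_b})\to H^2({_IY_b})\}$) and Theorem~\ref{thm}, and you have filled in the same transport-along-the-diffeomorphism details, including the compatibility of $f^*$ with the forget-supports map. The only cosmetic point is that the existence of such an $f$ is Theorem~\ref{diff}, but nothing in your deduction is missing.
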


\begin{proof}
This identity follows from Proposition \ref{perverse} and Theorem \ref{thm}.
\end{proof}

\subsection{Type II}
In this section, we prove the $P=W$ identity for $_{II}X_b$ and $_{II}Y_b$ via any diffeomorphism.
\begin{proposition} \label{w2}
\[
W_2H^2(_{II}X_b)=\im\{H_c^2(_{II}X_b)\to H^2(_{II}X_b)\}.
\]
\end{proposition}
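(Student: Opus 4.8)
The plan is to mimic the Type I argument (Proposition~\ref{w} together with Corollary~\ref{5.6}), now using the two cycles $Z_S=\{|x|=|x'|,\,|y|=1\}$ and $Z_T=\{|y|=|y'|,\,|x|=1\}$ that serve as deformation retracts of the open pieces $U_b,V_b$. Concretely, I would first recall from Proposition~\ref{hodge2}.(4) that $W_2H^2(_{II}X_b)\otimes\C$ is spanned by $x\gamma,\dots,x^{b-1}\gamma,y\gamma,\dots,y^{b-1}\gamma$, and that the weight-$4$ line is spanned by $\gamma$. By the Mayer--Vietoris identification of $_{II}X_b=U_b\cup V_b$ with $U_b\cong V_b\cong{_IX_b}$, the forms $y^c\gamma$ are pulled back from $U_b$ and $x^c\gamma$ from $V_b$, so Proposition~\ref{toy lemma} (applied on each copy of $_IX_b$) gives
\[
\int_{Z_S}y^c\gamma=0,\quad \int_{Z_S}\gamma=1,\qquad \int_{Z_T}x^c\gamma=0,\quad \int_{Z_T}\gamma=1,
\]
for $1\le c\le b-1$. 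I also need the vanishing of the ``cross'' integrals $\int_{Z_S}x^c\gamma$ and $\int_{Z_T}y^c\gamma$: since $Z_S$ lies in the locus $x\neq0$ (it is the necklace of $S_j$'s, contained in $V_b$) one computes $\int_{Z_S}x^c\gamma$ exactly as in Proposition~\ref{toy lemma} with the roles of $x$ and $y$ interchanged, giving again $0$, and symmetrically for $Z_T$. Hence both $\int_{Z_S}$ and $\int_{Z_T}$ kill all of $W_2H^2(_{II}X_b)\otimes\C$ but not $\gamma$; since $\dim W_2H^2=2b-2$ while $H^2$ has dimension $2b-1$, the common kernel of the two functionals has dimension at least $2b-2$, and since neither kills $\gamma$ it equals $W_2H^2(_{II}X_b)\otimes\C$. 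By the rationality lemma used after Proposition~\ref{w}, $W_2H^2(_{II}X_b)$ is the $\Q$-kernel of $(\int_{Z_S},\int_{Z_T})$.

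Next I would produce the matching topological description on the $P$ side and then read it back. By Proposition~\ref{perverse2}, $P_1H^2(_{II}Y_b)$ is spanned by the fundamental classes of the irreducible components of the two $I_b$ fibers $h^{-1}(p_1),h^{-1}(p_2)$; running the argument of Proposition~\ref{perverse} for the decomposition in Proposition~\ref{perverse2} (the maps $H^1_c(\Delta,j_*L)\to H^1(\Delta,j_*L)$ and $H^0_c(\Delta,\Q_\Delta)\to H^0(\Delta,\Q_\Delta)$ are zero, the skyscraper parts are identities) gives
\[
P_1H^2(_{II}Y_b)=\im\{H^2_c(_{II}Y_b)\to H^2(_{II}Y_b)\}.
\]
Alternatively, and what I actually want for this proposition: exactly as in Proposition~\ref{p}, $P_1H^2(_{II}Y_b)$ is the common kernel of $\int_{F_1},\int_{F_2}$ where $F_i=h^{-1}(p_i)$, because the components of each $I_b$ fiber have intersection form the affine $\tilde A_{b-1}$ matrix (self-intersection $-2$, adjacent $1$), so all of $P_1$ pairs to zero with $F_1$ and $F_2$, while a multisection pairs nontrivially, and a dimension count ($\dim P_1=2b-2$, $\dim H^2=2b-1$) forces equality. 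Under the diffeomorphism $f:{_{II}X_b}\to{_{II}Y_b}$ constructed in Proposition~\ref{diff2}, the necklaces $Z_S,Z_T$ (which are deformation retracts of the two $I_b$-type pieces) are carried to $\pm F_1,\pm F_2$ in $H_2$, since the handlebody construction attaches the $2$-handles precisely along the vanishing cycles of the two $I_b$ fibers. Therefore $f^*$ identifies the common kernel of $(\int_{F_1},\int_{F_2})$ with that of $(\int_{Z_S},\int_{Z_T})$, i.e. $f^*P_1H^2(_{II}Y_b)=W_2H^2(_{II}X_b)$, and combining with the displayed equality for $P_1$ on the $Y$-side yields the claim.

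The main obstacle I anticipate is bookkeeping rather than a conceptual gap: making sure the ``cross'' period integrals $\int_{Z_S}x^c\gamma$ and $\int_{Z_T}y^c\gamma$ really vanish, since $Z_S$ and $Z_T$ are not symmetric under $x\leftrightarrow y$ in an obvious way (one lives over $|y|=1$, the other over $|x|=1$), and one must check that $x\gamma$ restricted to the open set containing $Z_S$ is still of the form to which a Proposition~\ref{toy lemma}-type computation applies. A clean way around this is to use the Mayer--Vietoris picture directly: $Z_S$ is homologous in $_{II}X_b$ to the necklace generating $H_2$ of the sub-$_IX_b$ on which $\gamma,y\gamma,\dots,y^{b-1}\gamma$ live, and on that $_IX_b$ the class $x^c\gamma|_{U_b}$ is easily seen to lie in $W_2$ and hence (being a sum of the $(2\pi i)y^k\gamma$'s or expressible via them) integrates to $0$ over the necklace by Proposition~\ref{w}. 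I would also double-check that the two functionals are genuinely independent on $H^2(_{II}X_b;\Q)$ — equivalently that $S_1+\dots+S_b$ and $T_1+\dots+T_b$, which are equal in homology by Proposition~\ref{hodge2}.(2), still give the same functional, which is consistent since $\int_{Z_S}=\int_{Z_T}$ on all of $H^2$; thus in fact the two functionals coincide, and $W_2H^2(_{II}X_b)$ is simply the kernel of the single functional $\int_{Z_S}=\int_{S_1+\dots+S_b}$, matching $\im\{H^2_c\to H^2\}$ via Proposition~\ref{perverse2} and the diffeomorphism.
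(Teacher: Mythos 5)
Your proposal reaches the right conclusion, but by a genuinely different and much more roundabout route than the paper, and it contains one step that is not actually justified. The paper's proof never leaves the $X$ side: it writes ${_{II}X_b}=U_b\cup V_b$ with $U_b\cong V_b\cong{_IX_b}$ and $U_b\cap V_b=X_0$, checks that restriction and proper pushforward fit into a commutative square (the commutativity is verified by exhibiting explicit Borel--Moore $3$-chains bounding $S_i\cap V_b$), and uses the Mayer--Vietoris sequences ($H^1_c(X_0)=0$ gives surjectivity of $i_*$ on $H^2_c$, and $H^1(U_b)\oplus H^1(V_b)\cong H^1(X_0)$ gives injectivity of $i^*$ on $H^2$) to identify $\im\{H^2_c({_{II}X_b})\to H^2({_{II}X_b})\}$ with $\im\{H^2_c(U_b)\to H^2(U_b)\}\oplus\im\{H^2_c(V_b)\to H^2(V_b)\}$. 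Combined with Corollary~\ref{5.6} and $W_2H^2({_{II}X_b})\cong W_2H^2(U_b)\oplus W_2H^2(V_b)$, the statement follows with no periods, no ${_{II}Y_b}$, and no diffeomorphism. Your period computation of $W_2$ is fine once you use your own fallback as the main argument: the direct ``$x\leftrightarrow y$ interchange'' for $\int_{Z_S}x^c\gamma$ does not literally apply (the cycle $Z_S$ lives over $|y|=1$, not $|x|=1$), but since $[Z_S]=[S_1+\cdots+S_b]=[T_1+\cdots+T_b]=[Z_T]$ in $H_2({_{II}X_b};\Q)$ the two functionals coincide, so $\int_{Z_S}x^c\gamma=\int_{Z_T}x^c\gamma=0$ and the dimension count gives $W_2H^2({_{II}X_b})=\ker\int_{[Z_S]}$.

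The genuine gap is in converting $\ker\int_{[Z_S]}$ into $\im\{H^2_c\to H^2\}$ by passing through ${_{II}Y_b}$: you assert that the diffeomorphism of Proposition~\ref{diff2} carries $[Z_S],[Z_T]$ to $\pm[F_1],\pm[F_2]$. This is plausible from the handle picture, but it is proved nowhere (Proposition~\ref{diff2} only produces some diffeomorphism; it does not track where the necklaces go in homology), and for an arbitrary diffeomorphism it is not obvious that necklaces map to fibers at all. Be aware also that you cannot shortcut this by citing $f^*P_1=W_2$, since Theorem~\ref{thm2} is deduced from the present proposition, so that would be circular. If you want to salvage your route without the unproved transport step, the cleanest repair is intrinsic: under $H^2(\,\cdot\,;\Q)\cong H_2(\,\cdot\,;\Q)^{\ast}$ the subspace $\im\{H^2_c\to H^2\}$ is the image of the intersection form, hence equals $\ker\int_{[Z_S]}$ precisely when the radical of the intersection form on $H_2({_{II}X_b};\Q)$ is exactly the line $\Q[Z_S]$; but establishing that requires computing the full intersection matrix (in particular the products $S_i\cdot T_j$), which is more work than the paper's Mayer--Vietoris reduction to the Type I case.
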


\begin{proof}
By Proposition \ref{hodge2}.(1), $_{II}X_b=U_b\cup V_b$, where $U_b\cong V_b\cong{_IX_b}$ and $U_b\cap V_b=X_0$. Denote $i_U:U_b\to {_{II}X_b}$ and $i_V:V_b\to {_{II}X_b}$.
We first show that the diagram 
\[
\begin{tikzcd}
H^2_c({_{II}X_b})\arrow[d,"\iota"]& H^2_c(U_b)\oplus H^2_c(V_b)\arrow[l,"i_*"']\arrow[d,"\iota'"]\\
H^2({_{II}X_b})\arrow[r,"i^*"] & H^2(U_b)\oplus H^2(V_b)
\end{tikzcd}
\]
commutes, where $i_*(\alpha,\beta)=i_{U*}\alpha-i_{V*}\beta$ and $i^*\alpha=(i^*_U\alpha,-i^*_V\alpha)$. By the symmetry between $U$ and $V$, it suffices to show that a basis of $H^2_c(U_b)$ maps $0$ in $H^2(V_b)$ via $i^*\circ \iota\circ i_*$. To see this, by Proposition \ref{hodge}.(3) and Proposition \ref{hodge2}.(2), $H^2_c(U_b)\cong H_2(U_b)$ is generated by spheres $S_1,\cdots,S_b$. Their images under $i^*\circ\iota\circ i_*$ are the Borel-Moore classes represented by $S_i\cap V_b$. They are the boundaries of the Borel-Moore 3-classes in $V_b$ represented by 
\[
\{(x,y,x',y')\in V_b\mid 0<|x|<\sqrt{|y^b+1|}, |y|=1,(2\pi i-1)/b<\arg y<(2\pi i+1)/b\},
\] 
hence are 0 in $H^2(V_b)$. This proves that the diagram commutes. Since $H^1_c(U_b\cap V_b)\cong H_c^1(X_0)=0$, the compactly supported Mayer-Vietoris sequence shows that $i_*$ is surjective. Similarly, $H^1(U_b)\oplus H^1(V_b)\cong H^1(U_b\cap V_b)$ implies that $i^*$ is injective. It follows that $i^*$ induces an isomorphism $\im \iota\xrightarrow{\sim}\im\iota'$. Now the identity follows from Corollary \ref{5.6} and
\[
W_2H^2({_{II}X_b})\xrightarrow{\sim}W_2H^2(U_b)\oplus W_2H^2(V_b).
\]
\end{proof}

\begin{theorem} \label{thm2}
For any diffeomorphism $f:{_{II}X_b}\to{_{II}Y_b}$, the $P=W$ identity (\ref{0001}) holds.
\end{theorem}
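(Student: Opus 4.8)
The plan is to mirror the Type I argument (Theorem~\ref{thm}) but now feeding in the Mayer--Vietoris comparison from Proposition~\ref{w2}. First I would compare dimensions: Proposition~\ref{hodge2}.(4) and Proposition~\ref{perverse2} show that the only interesting graded pieces live in $H^2$, where $\dim\Gr^W_2 H^2({_{II}X_b})=\dim\Gr^P_2 H^2({_{II}Y_b})=2b-2$ and $\dim\Gr^W_4 H^2({_{II}X_b})=\dim\Gr^P_4 H^2({_{II}Y_b})=1$, while $H^0$ and $H^1$ carry no nontrivial filtration steps. So the entire content of the theorem is the single identification
\[
f^*\bigl(P_1H^2({_{II}Y_b})\bigr)=W_2H^2({_{II}X_b})
\]
inside $H^2({_{II}X_b};\Q)$, after which the $P=W$ identity for all $k$ follows automatically (the $k=0$ and $k\ge2$ statements being forced by the dimension count, since $P_0=W_0$ captures $H^0$ and $P_2=W_4=H^2$).

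Next I would describe both sides intrinsically, so that a mere diffeomorphism suffices. On the $W$ side, Proposition~\ref{w2} already gives $W_2H^2({_{II}X_b})=\im\{H^2_c({_{II}X_b})\to H^2({_{II}X_b})\}$, which is manifestly a diffeomorphism invariant. On the $P$ side I would prove the analogue of \eqref{0101}, namely
\[
P_1H^2({_{II}Y_b})=\im\{H^2_c({_{II}Y_b})\to H^2({_{II}Y_b})\},
\]
by repeating the proof of Proposition~\ref{perverse}: the map $\mathbb{H}^2_c(\Delta,Rh_*\Q)\to\mathbb{H}^2(\Delta,Rh_*\Q)$ splits, via the decomposition of Proposition~\ref{perverse2}, into $H^1_c(\Delta,j_*L)\to H^1(\Delta,j_*L)$ (zero, since $H^1(\Delta,j_*L)=0$ by the same \v Cech/long-exact-sequence computation, now with two punctures), the identity on $H^0_c(\Q_{p_1}^{b-1}\oplus\Q_{p_2}^{b-1})\to H^0(\cdots)$, and $H^0_c(\Delta,\Q_\Delta)\to H^0(\Delta,\Q_\Delta)$ (zero because $\Delta$ is noncompact). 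Hence the image is exactly the span of the fundamental classes of the irreducible components of the two $I_b$ fibers, which is $P_1H^2({_{II}Y_b})$ by Proposition~\ref{perverse2}. Since $f^*$ intertwines $H^2_c\to H^2$ on the two sides (pullback along a diffeomorphism commutes with the map from compactly supported to ordinary cohomology), we get $f^*(P_1H^2({_{II}Y_b}))=f^*\im\{H^2_c({_{II}Y_b})\to H^2({_{II}Y_b})\}=\im\{H^2_c({_{II}X_b})\to H^2({_{II}X_b})\}=W_2H^2({_{II}X_b})$, and the theorem follows.

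The main obstacle is establishing the $P$-side identity $P_1H^2({_{II}Y_b})=\im\{H^2_c\to H^2\}$ cleanly. Two points need care: (i) checking that $H^1(\Delta,j_*L)=0$ when $L$ has two puncture points with the two prescribed unipotent monodromies — one must verify $R^1j_*L$ is supported at $\{p_1,p_2\}$ with the right stalks and that $H^2(\Delta,j_*L)=0$, so that the long exact sequence forces $H^1(\Delta,j_*L)=0$ just as in the one-puncture case (the rank-2 local system $L$ still has one-dimensional invariants $H^0(\Delta^*,L)$, and the Euler characteristic bookkeeping goes through); and (ii) making sure the splitting of $\mathbb H^2_c\to\mathbb H^2$ along the summands of the decomposition theorem is compatible with the compactly supported version — this is standard functoriality of the decomposition (one works with $j_!$ versus $Rj_*$ and the summand $j_*L$ is self-dual up to the local contributions), but it should be spelled out. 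Everything else is a verbatim adaptation of the Type I arguments, so I would keep the write-up short, citing Proposition~\ref{perverse}, Proposition~\ref{w2}, and Corollary~\ref{5.6} as the templates.
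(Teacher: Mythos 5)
Your proposal is correct and follows essentially the same route as the paper: reduce to $P_1H^2=W_2H^2$ by comparing perverse and mixed Hodge numbers, characterize both sides as $\im\{H^2_c\to H^2\}$ (the $W$ side by Proposition~\ref{w2}, the $P$ side by rerunning the argument of Proposition~\ref{perverse} for two $I_b$ fibers), and conclude since that image is diffeomorphism invariant; you are in fact more explicit than the paper about the $P$-side identity. One small correction: for Type II the two monodromy matrices have no common fixed vector, so $H^0(\Delta\setminus\{p_1,p_2\},L)=0$ (not one-dimensional as you parenthetically claim) — consistent with $H^1({_{II}Y_b})=0$ — but the Euler characteristic bookkeeping still yields $H^1(\Delta,j_*L)=0$ and the argument is unaffected.
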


\begin{proof}
By the perverse numbers and mixed Hodge numbers in Proposition \ref{hodge2} and Proposition \ref{perverse2}, it suffices to prove 
\[
P_1H^2({_{II}Y_b})=W_2H^2({_{II}X_b}).
\]
Proposition \ref{hodge2} implies that $P_1H^2({_{II}Y_b})=\im \{H^2_c({_{II}Y_b})\to H^2_c({_{II}Y_b})\}$, and Proposition \ref{w2} implies that $W_2H^2({_{II}X_b})=\im \{H^2_c({_{II}X_b})\to H^2_c({_{II}X_b})\}.$ This completes the proof.
\end{proof}

\subsection{Deformation invariance of the perverse filtrations}
\begin{definition}\label{1010}
A deformation of elliptic fibrations is a commutative diagram of K\"ahler manifolds
\[
\begin{tikzcd} 
Y\arrow[rr,"h"]\arrow[rd]& & D\arrow[ld,"v"]\\
 & S, & 
\end{tikzcd}
\]
where $\dim Y=\dim S+2$ and $\dim D=\dim S+1$, such that
\begin{enumerate}
\item $h$ is proper surjective. $v$ is surjective.
\item $v\circ h:Y\to S$ is a locally trivial $C^\infty$ fibration.
\item For any $s\in S$, the restriction $h_s:(v\circ h)^{-1}(s)\to v^{-1}(s)$ is an elliptic fibration.
\end{enumerate} 
Two elliptic fibrations $h_1:Y_1\to D_1$ and $f_2:Y_2\to D_2$ are deformation equivalent if there exists a deformation of elliptic fibrations $(Y,D,S)$, and two points $s_1,s_2$ on $S$, such that $h_1=h_{s_1}$ and $h_2=h_{s_2}$.
\end{definition}

\begin{remark}
In this definition, the topological type of the fibration is allowed to be deformed. Only the underlying $C^\infty$ manifold of the total space is required to be fixed. For example, the elliptic fibration $h:{_{II}Y_b}\to \Delta$ is deformation equivalent to the elliptic fibration of type II. See \cite{N} for more examples.
\end{remark}

\begin{proposition} \label{def}
If two fibrations $h_1:Y_1\to D_1$ and $h_2:Y_2\to D_2$ are deformation equivalent, then there exists a diffeomorphism $t:Y_1\to Y_2$ such that the pull-back map 
\[
t^*: H^*(Y_2)\xrightarrow{\sim}H^*(Y_1)
\]
is an isomorphism of perverse filtrations, i.e.
\[
t^*:P_kH^*(Y_2)\xrightarrow{\sim}P_kH^*(Y_1).
\]
\end{proposition}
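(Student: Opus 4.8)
The plan is to reduce the statement to a standard fact: the perverse filtration associated with a proper map is a deformation invariant, which follows from Ehresmann's theorem applied to the family together with the constructibility and local nature of the decomposition theorem. First I would unwind the definition of deformation equivalence: by \definitionref{1010} there is a Kähler family $(Y,D,S)$ with $v\circ h:Y\to S$ a locally trivial $C^\infty$ fibration, and points $s_1,s_2\in S$ with $h_i=h_{s_i}$. Since $S$ is connected (or after replacing it by a path-connected open subset containing $s_1,s_2$, which suffices), I may assume $s_1$ and $s_2$ are joined by a path, and by the usual covering/connectedness argument it is enough to prove the statement locally on $S$, i.e. for $s_1,s_2$ in a small contractible ball. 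So I would fix such a ball and trivialize $v\circ h:Y\to S$ over it via Ehresmann, producing a diffeomorphism $t:Y_1\xrightarrow{\sim}Y_2$ of the fibers; the real content is that $t^*$ respects the perverse filtrations.

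The key step is to show that the perverse filtration on $H^*(Y_s)$ is constant in $s$. Here I would invoke the relative decomposition theorem of \cite{dCM} (and \cite{Sa}, using the Kähler hypothesis imposed in Section 3) applied not fiberwise but to the whole family, viewing $h:Y\to D$ as a proper map of Kähler manifolds and $v:D\to S$ as the base family. The point is that $Rh_*\Q_Y$ decomposes, and the summands $^{\mathfrak p}\mathcal{H}^i(Rh_*\Q_Y)$ restrict on each slice $v^{-1}(s)$ to the perverse cohomology sheaves of $Rh_{s*}\Q_{Y_s}$ — this uses that $v$ is a topologically locally trivial fibration, so the stratifications adapted to $h$ can be taken compatibly over $S$, and base change holds. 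Then the subspaces $P_kH^*(Y_s)\subset H^*(Y_s)$, being the images of the maps \eqref{0000} attached to the truncation triangles, fit together into subbundles of the local system $\{H^*(Y_s)\}_{s}$ over the contractible ball. A local system over a contractible base is trivial, and a sub-local-system is a subbundle preserved by the trivialization; hence the parallel transport identification $H^*(Y_{s_2})\cong H^*(Y_{s_1})$ carries $P_k$ to $P_k$. Finally I would check that the diffeomorphism $t$ produced by Ehresmann's trivialization induces exactly this parallel transport on cohomology (up to homotopy, which is all that matters for $t^*$), giving $t^*P_kH^*(Y_2)=P_kH^*(Y_1)$.

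The main obstacle I anticipate is the compatibility of the perverse decomposition across the family — that is, making precise that $^{\mathfrak p}\tau_{\le k}$ commutes with restriction to the fibers $v^{-1}(s)$ of $v$. Restriction to an open subset is harmless (perversity is preserved), but the fibers $v^{-1}(s)$ are closed, so one needs the stratification of $D$ by the discriminant of $h$ to be locally a product $S\times(\text{discriminant in }v^{-1}(s))$, which is exactly what local $C^\infty$-triviality of $v\circ h$ (condition (2) of \definitionref{1010}) buys, together with properness of $h$; once the stratified-triviality is in place, proper base change for the truncation functors along the inclusion $v^{-1}(s)\hookrightarrow D$ does the rest. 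An alternative, perhaps cleaner route that sidesteps relative perverse sheaves entirely: use that over a path in $S$ the perverse numbers $\dim\Gr^P_kH^d(Y_s)$ are upper semicontinuous and sum to $\dim H^d(Y_s)$ which is locally constant (local $C^\infty$-triviality), hence the perverse numbers are locally constant; combined with the fact that $P_\bullet$ varies "algebraically" in $s$ (the maps \eqref{0000} form morphisms of local systems, by the family decomposition theorem), constancy of the dimensions forces the filtration subbundles to be flat, and one concludes as before. I would present the first approach as the main line and remark that in the two cases of actual interest, Type I and Type II, one can alternatively just cite the explicit deformations in \cite{N} and check perverse numbers directly against Proposition~\ref{perverse} and Proposition~\ref{perverse2}.
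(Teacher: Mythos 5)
Your overall strategy---Ehresmann trivialization of $v\circ h$ to produce $t$, plus local constancy of the perverse filtration in the family---is the same as the paper's, but the paper disposes of the hard step by checking the hypotheses of de Cataldo--Maulik \cite[Theorem 3.2.1.(iii)]{dCMa} (namely that the local $C^\infty$-triviality of $v\circ h$ forces the vanishing cycles $\phi\Q_Y$ to vanish and $R(v\circ h)_*\Q_Y$ to have locally constant cohomology sheaves, and that $\Q_Y$ is simple) and then citing that theorem. You instead try to prove the hard step directly, and the mechanism you propose for it fails in exactly the situations the proposition is designed to cover. You claim that the local $C^\infty$-triviality of $v\circ h$ forces the discriminant of $h$ in $D$ to be locally a product $S\times(\text{discriminant in }v^{-1}(s))$, so that the stratification is constant over $S$ and perverse truncation commutes with restriction to the slices $v^{-1}(s)$. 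But condition (2) of \definitionref{1010} only constrains the composite $v\circ h$, i.e.\ the total space; the Remark following the definition stresses that the topological type of the fibration, and in particular the configuration of critical values, is allowed to change along $S$. The relevant deformations (confluence of singular fibers as in \cite{N}, e.g.\ $2I_b$ degenerating to a single fiber of type $II$) have discriminants that are branched covers of $S$, not products, so the stratified-triviality you rely on is simply false there, and with it the base-change argument for $^{\mathfrak p}\tau_{\le k}$ along the closed embedding $v^{-1}(s)\hookrightarrow D$. Handling precisely this non-product situation is the content of the cited theorem of de Cataldo--Maulik, and its hypotheses are where the $C^\infty$-triviality of $v\circ h$ (as opposed to any triviality of $h$ over $S$) actually enters.

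Your two fallback arguments do not close this gap either. Constancy of the perverse numbers (even granting the semicontinuity claim, which itself needs the family decomposition theorem you are trying to avoid) pins down only the dimensions $\dim\Gr^P_kH^d(Y_s)$, not the subspaces $P_kH^*(Y_s)\subset H^*(Y_s)$; the proposition asserts that $t^*$ carries the filtration to the filtration, which is strictly stronger and is exactly what is needed later (e.g.\ to transport the identification $P_1H^2=\mathrm{im}(H^2_c\to H^2)$ through the deformation). Likewise, checking perverse numbers against Proposition~\ref{perverse} and Proposition~\ref{perverse2} for the explicit deformations in \cite{N} compares dimensions only. The fix is to either quote \cite[Theorem 3.2.1.(iii)]{dCMa} after verifying its hypotheses, as the paper does, or to reproduce its proof, which is substantially more delicate than a product-stratification argument.
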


\begin{proof}
By Definition \ref{1010}.(2), the local $C^\infty$ triviality of $v\circ h$ produces a diffeomorphism $t:Y_1\to Y_2$. It also follows from the triviality that vanishing cycle $\phi\Q_Y=0$ and $R(v\circ h)_*\Q_Y$ has locally constant cohomology sheaves. Also note that $\Q_Y$ is simple. The proposition follows from \cite[Theorem 3.2.1.(iii)]{dCMa}. 
\end{proof}

Combining Proposition \ref{thm0}, Theorem \ref{thm}, Theorem \ref{thm2} and Proposition \ref{def}, we have the main theorem.
\begin{theorem} \label{main}
Let $X$ be a 2 dimensional cluster variety. Then there exists a deformation of elliptic fibrations such that for any elliptic fibration $h:Y\to D$ in the deformation, the following holds.
\begin{enumerate}
\item The underlying $C^\infty$ manifolds of $X$ and $Y$ are diffeomorphic.
\item For any diffeomorphism $f:X\to Y$, the $P=W$ identity holds, i.e. under the pullback isomorphism $f^*:H^*(Y;\Q)= H^*(X;\Q)$,
\[
P_kH^*(Y;\Q)=W_{2k}H^*(X;\Q)=W_{2k+1}H^*(X;\Q)
\]
for any $k\ge0$.
\end{enumerate}
More precisely, when $X=X_0,{_IX_b}$ or ${_{II}X_b}$, the corresponding elliptic fibration $h:Y\to D$ is deformation equivalent to $Y_0\to \Delta,{_IY_b}\to\Delta,{_{II}Y_b}\to\Delta$, respectively. 
\end{theorem}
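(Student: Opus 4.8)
The plan is to assemble \theoremref{main} by routing the three families $X_0$, ${_IX_b}$ and ${_{II}X_b}$ through the corresponding model elliptic fibrations $Y_0\to\Delta$, ${_IY_b}\to\Delta$ and ${_{II}Y_b}\to\Delta$, for which the $P=W$ identity has already been established via \propositionref{thm0}, \theoremref{thm} and \theoremref{thm2}, and then to propagate the conclusion across an arbitrary deformation class using \propositionref{def}. So the first step is simply to observe that part (1) — the existence of a diffeomorphism between the underlying $C^\infty$ manifold of $X$ and that of any fibration $Y\to D$ deformation equivalent to the model — follows by composing the diffeomorphisms $X\cong Y_{\mathrm{model}}$ (\propositionref{thm0} for type $0$, \theoremref{diff} for type $I$, \propositionref{diff2} for type $II$) with the diffeomorphism $Y_{\mathrm{model}}\cong Y$ supplied by \propositionref{def}. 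The corresponding elliptic fibration asserted in the ``more precisely'' clause is then exactly the one appearing in the deformation witnessing the equivalence, so there is nothing further to construct.

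For part (2) the key point is that $P=W$ is a statement about a pair of filtrations on $H^*$, and both filtrations are diffeomorphism-invariant in the relevant sense. Concretely: fix a cluster variety $X$, say $X={_IX_b}$ (the other two cases being identical in structure), let $h:Y\to D$ be any elliptic fibration in the chosen deformation, and let $f:X\to Y$ be an arbitrary diffeomorphism. By \propositionref{def} applied to the deformation equivalence between $h$ and the model ${_IY_b}\to\Delta$, there is a diffeomorphism $t:{_IY_b}\to Y$ with $t^*:P_kH^*(Y)\xrightarrow{\sim}P_kH^*({_IY_b})$. Then $t^{-1}\circ f:X\to{_IY_b}$ is a diffeomorphism, and since the weight filtration on the left is intrinsic to $X$ (it does not see $f$ at all), while \theoremref{thm} tells us that \emph{every} diffeomorphism ${_IX_b}\to{_IY_b}$ carries $P_k$ to $W_{2k}$, we conclude that $(t^{-1}\circ f)^*P_kH^*({_IY_b})=W_{2k}H^*(X)$. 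Composing, $f^*P_kH^*(Y)=f^*t^{-*}P_kH^*({_IY_b})=(t^{-1}\circ f)^*P_kH^*({_IY_b})=W_{2k}H^*(X)$, and the equality $W_{2k}=W_{2k+1}$ is read off directly from the Hodge-number tables in \propositionref{hodge} and \propositionref{hodge2} (and from the $\tilde A_0$ computation of \cite{Z} in type $0$), where all odd graded pieces vanish. One then runs the same three-line argument verbatim for $X=X_0$ using \propositionref{thm0} and for $X={_{II}X_b}$ using \theoremref{thm2}.

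I do not expect a genuine obstacle here, since every nontrivial ingredient — the diffeomorphism constructions, the identification of $W_2$ with $\mathrm{im}\{H^2_c\to H^2\}$, the identification of $P_1$ with the span of fiber components, and the invariance statement \propositionref{def} — has already been proved; the only thing to be careful about is the logical bookkeeping, namely that ``the $P=W$ identity holds for any diffeomorphism'' is preserved under composition with a \emph{fixed} $P$-filtration-preserving diffeomorphism $t$, which is precisely the elementary manipulation above. A minor point worth spelling out is why the degree-$0$ part is automatic ($P_0H^0=H^0=W_0H^0$ on both sides, since $H^0$ is one-dimensional of weight $0$ and the fibers are connected) and why nothing happens in higher cohomological degree (all varieties in sight are homotopy equivalent to real $2$-complexes, so $H^{>2}=0$). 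With that observed, the assembly is complete.
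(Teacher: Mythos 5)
Your proposal is correct and takes essentially the same route as the paper, which simply states that the theorem follows by combining Proposition \ref{thm0}, Theorem \ref{thm}, Theorem \ref{thm2} and Proposition \ref{def}; your explicit bookkeeping with $t^{-1}\circ f$ is exactly the intended (and necessary) way to exploit the ``any diffeomorphism'' clause of those results together with the perverse-filtration-preserving diffeomorphism from Proposition \ref{def}.
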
 

Since perverse filtrations encode topological information only, it is natural to ask whether the $P$ side is compeletely determined on the underlying $C^\infty$ manifold structure of $X$.
We make the following conjecture.

\begin{conjecture}
Let $X$ be a 2 dimensional cluster variety. Let $h:Y\to\Delta$ be a Lefschetz $T^2$-fibration over the unit disk, such that $Y$ is diffeomorphic to $X$. Then for any diffeomorphism $f:X\to Y$, the $P=W$ identity (\ref{0001}) holds.
\end{conjecture}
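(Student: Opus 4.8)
The plan is to rerun the proofs of \theoremref{thm} and \theoremref{thm2}, feeding in topological input in place of the specific geometry of ${_IY_b}$ and ${_{II}Y_b}$. Write $X$ for the $2$D cluster variety, $h\colon Y\to\Delta$ for the given Lefschetz $T^2$-fibration, and $f\colon X\to Y$ for the diffeomorphism. First I would reduce the full identity to a single equality. By the mixed Hodge number tables of \propositionref{hodge} and \propositionref{hodge2} (together with the elementary case $X=X_0$, where $W_2H^2(X_0)=0$), the weight filtration on $H^*(X)$ has $\Gr^W_1=\Gr^W_3=0$ and its only step that is neither $0$ nor the whole group is the codimension-one subspace $W_2H^2(X)\subset H^2(X)$. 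On the other side, granting that the perverse decomposition theorem applies (see the last paragraph), $Rh_*\Q_Y[1]\cong(\Q_\Delta[1])\oplus M[-1]\oplus(\Q_\Delta[1])[-2]$ with $M$ perverse, exactly as in \propositionref{perverse}; hence $\Gr^P_0H^2(Y)=H^2(\Delta)=0$, $\Gr^P_2H^2(Y)\cong H^0(\Delta)=\Q$, and $H^0(Y)$, $H^1(Y)$ each lie in a single perverse step, so the only nontrivial step of $P_\bullet$ is the codimension-one subspace $P_1H^2(Y)\subset H^2(Y)$. Therefore \eqref{0001} for $f$ amounts to the single assertion $f^*\bigl(P_1H^2(Y)\bigr)=W_2H^2(X)$.

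Next I would make both sides purely topological. By \corollaryref{5.6}, \propositionref{w2} (and a direct computation in the $X_0$ case) one has $W_2H^2(X)=\im\{H^2_c(X)\to H^2(X)\}$, and the right-hand side is a topological invariant of $X$; thus $f^*$ carries $\im\{H^2_c(Y)\to H^2(Y)\}$ onto $\im\{H^2_c(X)\to H^2(X)\}=W_2H^2(X)$. Consequently it suffices to prove
\[
P_1H^2(Y)=\im\{H^2_c(Y)\to H^2(Y)\}
\]
for every Lefschetz $T^2$-fibration $h\colon Y\to\Delta$ whose total space is diffeomorphic to a $2$D cluster variety.

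To prove this equality I would argue exactly as in \propositionref{perverse}. Here $M=j_*L[1]$, with $L=j^*R^1h_*\Q_Y$ on $\Delta$ minus the critical values; the singular fibres of a Lefschetz $T^2$-fibration are irreducible nodal cubics, so no skyscraper summands occur (and the argument is unaffected if one allows them). The map $H^2_c(Y)\to H^2(Y)$ is compatible with the perverse filtrations and decomposes as the direct sum of $H^2_c(\Delta)\to H^2(\Delta)=0$, of $H^1_c(\Delta,j_*L)\to H^1(\Delta,j_*L)$, and of $H^0_c(\Delta)=0\to H^0(\Delta)$; the first and third vanish, so $\im\{H^2_c(Y)\to H^2(Y)\}\subseteq P_1H^2(Y)$. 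On the other hand $\dim P_1H^2(Y)=b_2(Y)-\dim\Gr^P_0H^2(Y)-\dim\Gr^P_2H^2(Y)=b_2(Y)-1$, while by the previous paragraph $\dim\im\{H^2_c(Y)\to H^2(Y)\}=\dim\im\{H^2_c(X)\to H^2(X)\}=\dim W_2H^2(X)=b_2(X)-1=b_2(Y)-1$. An inclusion of subspaces of equal dimension is an equality, which yields the displayed identity and, via the reductions above, the conjecture.

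\textbf{The main obstacle.} The one nonroutine point is the hypothesis under which the perverse decomposition theorem of \cite{Sa,dCM} applies and, indeed, under which the perverse filtration of \eqref{0000} is defined: both require $h\colon Y\to\Delta$ to be a proper K\"ahler morphism, whereas a Lefschetz $T^2$-fibration is a priori only a smooth (symplectic) object. A complete proof must therefore either show that every Lefschetz $T^2$-fibration over $\Delta$ with the prescribed diffeomorphism type is, up to a diffeomorphism of the total space, a holomorphic elliptic fibration over $\Delta$ --- which should follow from the classification of such fibrations by their monodromy word in $\mathrm{SL}(2,\Z)$ and the realizability of any such word by an elliptic fibration with $I_1$ fibres --- or else build a topological surrogate for the perverse filtration over the one-dimensional base $\Delta$, by a Mayer--Vietoris and vanishing-cycle analysis, and redo the computation of the previous paragraph in that language. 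Once one is in the K\"ahler setting the remaining steps are the bookkeeping above, which is insensitive to the number and type of singular fibres and to whether $X$ is $X_0$, ${_IX_b}$ or ${_{II}X_b}$.
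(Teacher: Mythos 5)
This statement is stated in the paper as a \emph{conjecture}, not a theorem: the paper offers no proof, only the closing remark that \emph{if} the Lefschetz fibration $h$ happens to underlie a K\"ahler structure then the arguments of \propositionref{perverse} and \propositionref{perverse2} apply, and that more input from Kirby calculus would be needed in general. Your reduction is sound and faithfully mirrors the paper's strategy for Types I and II: collapsing \eqref{0001} to the single equality $f^*(P_1H^2(Y))=W_2H^2(X)$, replacing $W_2H^2(X)$ by the topological invariant $\im\{H^2_c(X)\to H^2(X)\}$ via \corollaryref{5.6} and \propositionref{w2}, and thereby reducing to $P_1H^2(Y)=\im\{H^2_c(Y)\to H^2(Y)\}$.

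However, what you label ``the main obstacle'' is not a loose end to be tidied up --- it is the entire content of the conjecture, and neither of your two proposed fixes is carried out or routine. Every step of your computation of $P_\bullet H^2(Y)$ (the splitting of $Rh_*\Q_Y[1]$ into three summands, the vanishing of $\Gr^P_0H^2$, the identification $\Gr^P_2H^2\cong H^0(\Delta)$, and the compatibility of $H^2_c\to H^2$ with that splitting) rests on Saito's decomposition theorem, which requires $h$ to be a proper K\"ahler morphism; for a merely smooth Lefschetz $T^2$-fibration even the perverse \emph{numbers} are not known to be determined by the Betti numbers, which is exactly what the paper's final paragraph points out. Your first proposed repair is also subtler than stated: it is not enough that $Y$ be diffeomorphic to some holomorphic elliptic surface --- the perverse filtration depends on the map $h$, so you would need the given fibration $h\colon Y\to\Delta$ to be isomorphic \emph{as a fibration} to a holomorphic one. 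The diffeomorphism type of $Y$ pins down only the number of Lefschetz critical points (the Euler characteristic), not the monodromy factorization of $h$ up to Hurwitz equivalence, so relating an arbitrary such $h$ to ${_IY_b}$ or ${_{II}Y_b}$ is an open problem, not a classification exercise. As written, the proposal is a correct reduction of the conjecture to its genuinely open core, not a proof.
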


When the fibration $h:Y\to\Delta$ underlies a K\"ahler structure, the arguments in Proposition \ref{perverse} and Proposition \ref{perverse2} show that the perverse numbers are determined by the Betti numbers, and hence match the mixed Hodge numbers of $X$. It also follows that the peverses numbers are constant in a family of Lefschetz $T^2$-fibrations which vary smoothly (instead of holomorphically). More understanding on Kirby calculus and the behavior of perverse filtartions in non-complex setting will be helpful to relate any such $Y$ to $_IY_b$ or $_{II}Y_b$ to compare the perverse filtrations.


\begin{thebibliography}{99}
\bibitem{BBD}
A. Beilinson, J. Bernstein, P. Deligne, {\em Faisceaux pervers,} Asterisque 100, 1982.
\bibitem{BFZ}
A. Bernstein, S. Formin, A. Zelevinski, {\em Cluster algebras III: Upper bounds and double Bruhat cells,}
Duke Math. J. Volume 126, Number 1 (2005), 1-52.
\bibitem{BHPV}
W. Barth, K. Hulek, C. Peters, A. vande Ven, {\em Compact complex surfaces,} Springer, 2004.
\bibitem{dCMa}
M. de Cataldo, D. Maulik, {\em The perverse filtration for the Hitchin fibration is locally constant,} 
arXiv:1808.02235.
\bibitem{dCM}
M. de Cataldo, L. Migliorini, {\em Intersection forms, topology of algebraic maps and motivic decompositions for resolutions of threefolds,} 
Algebraic cycles and Motives, London Mat.Soc., Lecture Notes Series, n.343, vol.1, pp.102-137, Cambridge University Press, Cambridge, UK, 2007.
\bibitem{dCM1}
M. de Cataldo, L. Migliorini, {\em The Hodge theory of algebraic maps} 
Ann. Scient. Ec. Norm. Sup., 4e serie, t.38, 2005, p. 693-750.
\bibitem{dCM2}
M. de Cataldo, L. Migliorini, {\em The perverse filtration and the Lefschetz hyperplane theorem} 
Annals of Mathematics, Vol. 171, No. 3, 2010, 2089-2113.
\bibitem{dCHM}
M. de Cataldo, L. Migliorini, T. Hausel, {\em  Topology of Hitchin systems and Hodge theory of character varieties: the case $A1$,}
Annals of Mathematics 175 (2012), 1329-1407.
\bibitem{FZ}
S. Fomin, A. Zelevinsky, {\em Cluster algebras. I. Foundations.}
J. Amer. Math. Soc. 15 (2002), no. 2, 497-529.
\bibitem{GS}
R. E. Gompf, A. I. Stipsicz, {\em 4-Manifolds and Kirby Calculus,}
Graduate Studies in Mathematics, Volume 20, Springer, 1989.
\bibitem{HKK}
J. Harer, A. Kas, R. Kirby, {\em Handlebody decompositions of complex surfaces,}
Memoirs of the American Mathematical Society, Number 350, 1986.
\bibitem{HV}
T. Hausel, F. Rodriguez-Villegas {\em Mixed Hodge polynomials of character varieties,}
Invent. math. 174, 555--624 (2008).
\bibitem{I}
P. Ivanics, A. I. Stipsicz, S. Szab\'o,{\em Hitchin fibrations on moduli of irregular Higgs bundles and motivic wall-crossing,}
Journal of Pure and Applied Algebra. DOI:10.1016/j.jpaa.2018.12.016. 
\bibitem{KM}
R. Kirby, P. Melvin, {\em The $E_8$--manifold, singular fibers and handlebody decompositions,}
Geometry and Topology Monographs, Volumn 2: Proceeding of the Kirbyfest, 233-258, 1998.
\bibitem{LS}
T. Lam, D. Speyer, {\em Cohomology of cluster varieties. I. Locally acyclic case,} arXiv: 1604.06843.
\bibitem{N}
I. Naruki, {\em On confluence of singular fibers in elliptic fibrations,}
Publ. RIMS, Kyoto Univ. 23 (1987), 409-431.
\bibitem{Sa}
M. Saito, {\em Decomposition theorem for proper K\"ahler morphisms}
T\^ohoku Math. J. 42 (1990), 127-148.
\bibitem{S}
S. Szab\'o, {\em Perversity equals weight for Painlev\'e spaces,}
arXiv: 1802.03798.
\bibitem{Si}
C. T. Simpson, {\em Higgs bundles and local systems,}
Publ. Math. Inst. Hautes Etudes Sci. 75 (1992) 5–95.
\bibitem{SZ}
J. Shen, Z. Zhang, {\em Perverse filtrations, Hilbert schemes, and the P=W conjecture for parabolic Higgs bundles,}
arXiv: 1810.05330.
\bibitem{Z}
Z. Zhang {\em Multiplicativity of perverse filtration for Hilbert schemes of fibered surfaces,}
Advances in Mathematics, Volume 312, 25 May 2017, Pages 636-679.
\end{thebibliography}
\end{document}